\pdfoutput=1

\documentclass[journal,twocolumn]{IEEEtran}
\usepackage[square,sort&compress,comma,numbers]{natbib}
\usepackage{kz}
\usepackage{slashbox}
\usepackage{enumerate}
\usepackage{booktabs}
\newcommand{\ra}[1]{\renewcommand{\arraystretch}{#1}}
\usepackage{lipsum}

\ifCLASSINFOpdf
 \else
\fi

\begin{document}
\title{Spherical Cap Packing Asymptotics \\ and Rank-Extreme Detection}

\author{Kai Zhang
\thanks{Kai Zhang is with the Department of Statistics and Operations Research, University of North Carolina, Chapel Hill (e-mail: zhangk@email.unc.edu)}}
\markboth{IEEE TRANSACTIONS ON INFORMATION THEORY}%
{Zhang: Spherical Cap Packing Asymptotics}
\maketitle

\begin{abstract}
We study the spherical cap packing problem with a probabilistic approach. Such probabilistic considerations result in an asymptotic sharp universal uniform bound on the maximal inner product between any set of unit vectors and a stochastically independent uniformly distributed unit vector. When the set of unit vectors are themselves independently uniformly distributed, we further develop the extreme value distribution limit of the maximal inner product, which characterizes its uncertainty around the bound.

As applications of the above asymptotic results, we derive (1) an asymptotic sharp universal uniform bound on the maximal spurious correlation, as well as its uniform convergence in distribution when the explanatory variables are independently Gaussian distributed; and (2) an asymptotic sharp universal bound on the maximum norm of a low-rank elliptically distributed vector, as well as related limiting distributions. With these results, we develop a fast detection method for a low-rank structure in high-dimensional Gaussian data without using the spectrum information.
\end{abstract}

\begin{IEEEkeywords}
Spherical cap packing, extreme value distribution, spurious correlation, low-rank detection and estimation, high-dimensional inference.
\end{IEEEkeywords}

\IEEEpeerreviewmaketitle

\section{Introduction}\label{sec:intro}
In modern data analysis, datasets often contain a large number of variables with complicated dependence structures. This situation is especially common in important problems such as the relationship between genetics and cancer, the association between brain connectivity and cognitive states, the effect of social media on consumers' confidence, etc. Hundreds of research papers on analyzing such dependence have been published in top journals and conferences proceedings. For a comprehensive review of these challenges and past studies, see \cite{Fan2014}.

One of the most important measures on the dependence between variables is the correlation coefficient, which describes their linear dependence. In the new paradigm described above, understanding the correlation and the behavior of correlated variables is a crucial problem and prompts data scientists to develop new theories and methods. Among the important challenges of a large number of variables on the correlation, we focus particularly on the following two questions:

\begin{itemize}
  \item {\bf The maximal spurious sample correlation in high dimensions.} The Pearson's sample correlation coefficient between two random variables $X$ and $Y$ based on $n$ observations can be written as
      \begin{equation}
        \hat{C}(X,Y) = {\sum_{i=1}^n (X_i-\bar{X})(Y_i-\bar{Y})\over \sqrt{\sum_{i=1}^n (X_i-\bar{X})^2}\sqrt{\sum_{i=1}^n(Y_i-\bar{Y})^2}}
      \end{equation}
      where $X_i$'s and $Y_i$'s are the $n$ independent and identically distributed (i.i.d.) observations of $X$ and $Y$ respectively, and $\bar{X}$ and $\bar{Y}$ are the sample means of $X$ and $Y$ respectively. The sample correlation coefficient possesses important statistical properties and was carefully studied in the classical case when the number of variables is small compared to the number of observations. However, the situation has dramatically changed in the new high-dimensional paradigm \citep{Johnstone13112009,Fan2014} as the large number of variables in the data leads to the failure of many conventional statistical methods. For sample correlations, one of the most important challenges is that when the number of explanatory variables, $p$, in the data is high, simply by chance, some explanatory variable will appear to be highly correlated with the response variable even if they are all scientifically irrelevant \citep{fan2011sparse,Fan2012}. Failure to recognize such spurious correlations can lead to false scientific discoveries and serious consequences. Thus, it is important to understand the magnitude and distribution of the maximal spurious correlation to help distinguish signals from noise in a large-$p$ situation.
  \item {\bf Detection of low-rank correlation structure.} Detecting a low-rank structure in a high-dimensional dataset is of great interest in many scientific areas such as signal processing, chemometrics, and econometrics. Current rank estimation methods are mostly developed under the factor model and are based on the principal component analysis (PCA) \citep{Markovsky2012, yang365290,rabideau1996fast, kavcic506625,real1999two, shi2007adaptive,badeau2008fast, Bartelmaos4479581,doukopoulos2008fast, Kritchman200819,Kritchman2009, johnstone2009consistency,perry5447660,berthet2013,donoho2013optimal,cai2014,choi2014selecting}, where we look for the ``cut-off'' among singular values of the covariance matrix when they drop to nearly 0. These methods also usually assume a large sample size. However, in practice often a large number of variables are observed while the sample size is limited. In particular, PCA based methods will fail when the number of observations is less than the rank. Moreover, although we may get low-rank solutions to many problems, more detailed inference on the rank as a parameter is not very clear. Probabilistic statements on the rank, such as confidence intervals and tests, would provide useful information about the accuracy of these solutions. The computation complexity of the matrix calculations can be an additional issue in practice. In summary, it is desirable to have a fast detection and inference method of a low-rank structure in high dimensions from a small sample.
\end{itemize}
Our study of the above two problems starts with the following question: Suppose $p$ points are placed on the unit sphere $\cS^{n-1}$ in $\RR^{n}.$ If we now generate a new point on $\cS^{n-1}$ according to the uniform distribution over the sphere, how far will it be away from these existing $p$ points?

Intuitively, this minimal distance between the new point and the existing $p$ points should depend on $n$ and $p$, in a manner that it is decreasing in $p$ and increasing in $n.$ Yet, no matter how these existing $p$ points are located, this new point cannot get arbitrarily close to the existing $p$ points due to randomness. In other words, for any $n$ and $p,$ there is an intrinsic lower bound on this distance that the new point can get closer to the existing points only with very small probability.

Studies of this intrinsic lower bound in the above question have a long history under the notion of spherical cap packing, and this question has been one of the most fundamental questions in mathematics \citep{Rankin1955, thompson1983,Conway1987}. In fact, this question is closely related to the 18th question on the famous list from David Hilbert \citep{hilbert1902}. This question is also a very important problem in information theory and has been studied in coding, beamforming, quantization, and many other areas \citep{wyner1967, Barg1027776, mukkavilli2003beamforming, henkel2005sphere,koetter2008coding,dai2008quantization,pitaval2011beamforming,Dalai6612702}.

Besides the importance in mathematics and information theory, this question is closely connected to the two problems that we propose to investigate. For instance, the sample correlation between $X$ and $Y$ can be written as the inner product
\begin{equation}\label{eq.corr.ind}
  \hat{C}(X,Y)=\langle {\bX-\bar{X}\one_n \over \|\bX-\bar{X}\one_n \|_2},{\bY-\bar{Y}\one_n  \over \|\bY-\bar{Y} \one_n \|_2} \rangle,
\end{equation}
where $\bX=(X_1,\ldots,X_n),$ $\bY=(Y_1,\ldots,Y_n),$ and $\one_n$ is the vector in $\RR^n$ with all ones. In general, if we observe $n$ i.i.d. samples from the joint distribution of $(X_1,\ldots,X_p,Y)$, the sample correlations between $X_j$'s and $Y$ can be regarded as inner products in $\RR^n$ between the $p$ unit vectors corresponding to $X_j$'s and another unit vector corresponding to $Y.$ Note that these unit vectors are all orthogonal to the vector $\one_n$ due to the centering process. Thus, they lie on an ``equator'' of the unit sphere $\cS^{n-1}$ in $\RR^n,$ which is in turn equivalent to $\cS^{n-2}.$ Through this connection, the problem about the maximal spurious correlation is equivalent to the packing of the inner products, and existing methods and results from the packing literature can be borrowed to analyze this problem. In this paper, we particularly focus on probabilistic statements about such packing problems.

An important advantage of this packing perspective is a view of data that is free of an increasing $p$. Suppose we view the data as $n$ points in a $p$-dimensional space, then if $p$ exceeds $n,$ all the $n$ points will lie on a low-dimensional hyperplane in $\RR^p.$ This degeneracy forces us to change the methodology towards statistical problems, i.e., changing from the classical statistical methods to recent high-dimensional methods \citep{htf2009,buhlmann2011}. However, if we view the data as $p$ vectors in $\RR^n$, then we will never have such a degeneracy problem. No matter how large $p$ is, a packing problem is always a well-defined packing problem. Neither the theory nor the methodology needs to be changed due to an increase in $p$. Thus, with the packing perspective, theory and methodology can be set free from the restriction of an increasing $p$.

We summarize below our results on the asymptotic theories of the maximal inner products and spurious correlations. One major advantage of the packing approach is that instead of usual iterative asymptotic results which set $p=p(n)$ and let $n \rightarrow \infty,$ our convergence results are uniform in $n$, which leads to double limits in both $n$ and $p$.
\begin{itemize}
  \item We characterize the largest magnitude of independent inner products (or spurious correlations) through an asymptotic bound. This bound is universal in the sense that it holds for arbitrary distributions of $\bL_j$'s (or that of $X_j$'s). This bound is uniform in the sense that it holds asymptotically in $p$ but is uniform over $n.$ This bound is sharp in the sense that it can be attained, especially when the unit vectors $\bL_j$'s are i.i.d. uniform (or when $X_j$'s are independently Gaussian). Thus, in an analogy, this bound is to the distribution of independent inner products (or to that of spurious correlations) as the fundamental bound $\sqrt{2 \log p}$ is to the $p$-dimensional Gaussian distribution \citep{leadbetter1983}. We refer this bound as the Sharp Asymptotic Bound for indEpendent inner pRoducts (or spuRious corrElations), abbreviated as the SABER (or SABRE).
  \item In the special important case when the set of unit vectors are i.i.d. uniformly distributed (or when $X_j$'s are independently Gaussian distributed), we show the sharpness of the SABER (or SABRE) and describe a smooth phase transition phenomenon of them according to the limit of ${\log p \over n}$. Furthermore, we develop the limiting distribution by combing the packing approach with extreme value theory in statistics \citep{leadbetter1983,Haan2006}. The extreme value theory results accurately characterize the deviation from the observed maximal magnitude of independent inner products (or that of spurious correlations) to the SABER (or SABRE). One important feature of these results is that they are not only finite sample results but also are uniform-$n$-large-$p$ asymptotics that are widely applicable in the high-dimensional paradigm.
\end{itemize}

The spherical cap packing asymptotics can be also applied to the problem of the detection of a low-rank linear dependency. For this problem, we observe that the largest magnitude among $p$ standard elliptical variables is closely related to the rank $d$ of their correlation matrix. This is seen by decomposing elliptically distributed random vectors into the products of common Euclidean norms and inner products of unit vectors in $\RR^d$, thus reducing the problem to one of spherical cap packing. As a consequence, the previous asymptotics can be applied here. We thus obtained a universal sharp asymptotic bound for the maximal magnitude of a degenerate elliptical distribution, as well as its limiting distribution when the unit vectors in the decomposition are i.i.d. uniform. Although many asymptotic bounds and limiting distributions on full rank maxima are well developed under different situations (see \citep{leadbetter1983,adler1990introduction,Haan2006} for reviews of the extensive existing literature), we are not able to find similar theory in literature on low-rank maxima from elliptical distributions, not even in the special case of Gaussian distribution. We refer the connection we found between the maximal magnitude and the rank as the rank-extreme (ReX) association.

Based on the asymptotic results on the degenerate elliptical distributions, we show that one can make statistical inference on a low-rank through the distributions of the extreme value as a statistic. One feature of this procedure is that it does not require the spectrum information from PCA. Thus, the new method works when $n<d$, when PCA based methods fail to work. It is also computationally fast since no matrix multiplication is needed in the algorithm. These advantages allow a fast detection of a low-dimensional correlation structure in high-dimensional data.

\subsection{Related Work}
We are not able to find similar probabilistic statements on uniform-$n$-large-$p$ asymptotics. The following statistical papers are related to the study on the maximal spurious correlation.
\begin{itemize}
  \item In \cite{Fan2012}, the authors obtain a result on the order of the maximal spurious correlations in the regime that ${\log p \over n} \rightarrow 0.$ Through the packing approach, we derive the explicit limiting distribution of the extreme spurious correlations for entire scope of $n$ and $p.$
  \item In \cite{hero2012}, the authors develop a threshold for marginal correlation screening with large $p$ and small $n.$ The threshold appears in a similar form as the SABRE. We note two major differences between the results: (1) The results in \cite{hero2012} focus on the regime when ${\log p\over n} \rightarrow \infty$ (i.e., when the threshold converges to 1), while our asymptotic results cover the entire scope of $n$ and $p$, and the SABRE is shown to be valid from 0 to 1; (2) we derive the explicit limiting distribution of the maximal spurious correlation in the most important case when the variables are i.i.d. Gaussian.
  \item In \cite{cj2011,cj2012,cfj2013}, the minimal pairwise angles between i.i.d. uniformly random points on spheres are considered. A similar phase transition is described, and results on the limiting distribution are developed. We note two major differences between their results and ours: (1) Due to different motivations of the research, we focus on the marginal correlation between one response variable and $p$ explanatory variables. We also develop a universal uniform bound for marginal correlations. (2) The extreme limiting distributions in their papers are stated separately according to if the limit of ${\log p \over n}$ is $0,$ a proper constant, or $\infty.$ From the packing perspective, we are able to state the convergence in a uniform manner with standardizing constants that are adaptive in $n$ and $p.$ Since in real data, the limit of ${\log p \over n}$ is usually not known, this uniform convergence with adaptive standardizing constants makes the result easy to apply in practice.
  \item During the review process of this paper, we noticed the results in \cite{fan2015discoveries} which focus on the coupling and bootstrap approximations of the maximal spurious correlation when ${\log^7 p \over n} \rightarrow 0$. Again our different focus is on explicit limiting distributions with adaptive standardizing constants from the packing perspective.
\end{itemize}
We are not able to find existing literature on the rank-extreme association. To evaluate the performance of our low-rank detection method, we compare our method with the algorithm in \citep{Kritchman200819} which studies a similar problem. During the review process of the paper, we also noticed recent work by \cite{choi2014selecting}. The most important difference from these papers is that they focus on the case when $n$ and $p$ are comparable and both large, while we consider the case when $n$ is small and $p$ is large.

\subsection{Outline of the Paper}
In Section~\ref{sec:spur}, we derive the asymptotic bound on the spherical packing problem, as well as that of the maximal spurious correlation and the related extreme value distributions. In Section~\ref{sec:rex}, we describe the rank-extreme association of elliptically distributed vectors. In Section~\ref{sec:rex.detect} we develop a fast detection method of a low-rank by using the rank-extreme association reversely. In Section~\ref{sec:simu}, we study the performance of the detection method through simulations. We conclude and discuss future work in Section~\ref{sec:disc}.

\section{Asymptotic Theory of the Spherical Cap Packing Problem}\label{sec:spur}
\subsection{The Sharp Asymptotic Bound for independent inner products (SABER) and Spurious Correlations (SABRE)}\label{subsec:spur.uub}
We first observe that as described in \citep{muirhead1982}, when $\bU$ is uniformly distributed over $\cS^{n-1},$ $|\langle \bL, \bU\rangle|^2 \sim Beta\big({1 \over 2},{ n-1 \over 2}\big), \forall \bL \in \cS^{n-1}$. By borrowing strength from the packing literature \citep{Rankin1955,wyner1967} on the total area of non-overlap spherical caps on $\cS^{n-2}$, we develop the following theorem on a sharp asymptotic bound for independent inner products.

\begin{theorem}\label{thm.saber}
{\bf Sharp Asymptotic Bound for Independent Inner Products (SABER).}\\
For arbitrary deterministic unit vectors $\bL_1,\ldots,\bL_p$ and a uniformly distributed unit vector $\bU$ over $\cS^{n-1}$, the random variable $M_{p,n}=\max_{1 \le j \le p} |\langle \bL_j, \bU\rangle|$ satisfies that $\forall \delta >0$,
{\small
\begin{equation}\label{ineq.sabre.finite}
\begin{split}
   &  \Pb\bigg(M_{p,n} > \sqrt{(1+\delta)(1-p^{-2/(n-1)})}\bigg) \\
\le & {\sqrt{2}p^{1/(n-1)} \exp\bigg(-{1 \over 2}\delta(n-1)(p^{2/(n-1)}-1)\bigg) \over \sqrt{\pi (1+\delta)(n-1)(p^{2/(n-1)}-1)}}.
\end{split}
\end{equation}
Therefore, $\forall \delta>0$,  as $p \rightarrow \infty,$
\begin{equation}
    \sup_{n \ge 2}  \Pb\bigg(M_{p,n} > \sqrt{(1+\delta)(1-p^{-2/(n-1)})}\bigg) \rightarrow 0.
\end{equation}
In particular, if $n \rightarrow \infty,$ then we have the double limit
\begin{equation}\label{ineq.sabre.large}
 \lim_{p,n \rightarrow \infty} \Pb\bigg(M_{p,n} \le \sqrt{1-p^{-2/(n-1)}} \bigg) = 1.
\end{equation}
}
\end{theorem}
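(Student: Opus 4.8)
The plan is to reduce the whole statement to a one-dimensional tail estimate for a Beta variable together with a union bound, and then to read off the three conclusions by careful bookkeeping. First I would fix $t\in(0,1)$, apply the union bound, and use the quoted fact $|\langle\bL_j,\bU\rangle|^2\sim Beta\big(\tfrac12,\tfrac{n-1}{2}\big)$ to write
\begin{equation}
\Pb(M_{p,n}>t)\le\sum_{j=1}^{p}\Pb(|\langle\bL_j,\bU\rangle|>t)=p\,\Pb(W>t^2),
\end{equation}
where $W\sim Beta\big(\tfrac12,\tfrac{n-1}{2}\big)$; the last equality holds because the marginal law of $|\langle\bL_j,\bU\rangle|$ does not depend on $\bL_j$. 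The core estimate is then a tail bound for $W$: writing out the density, bounding $x^{-1/2}\le s^{-1/2}$ on $[s,1]$, and integrating the factor $(1-x)^{(n-3)/2}$ exactly gives
\begin{equation}
\Pb(W>s)\le\frac{2\,s^{-1/2}(1-s)^{(n-1)/2}}{(n-1)\,B\big(\tfrac12,\tfrac{n-1}{2}\big)}.
\end{equation}
To turn the Beta-function constant into an explicit factor I would invoke Wendel's (Gautschi's) inequality $\Gamma(n/2)/\Gamma(\tfrac{n-1}{2})\le\big(\tfrac{n-1}{2}\big)^{1/2}$, so that $1/B\big(\tfrac12,\tfrac{n-1}{2}\big)\le\pi^{-1/2}\big(\tfrac{n-1}{2}\big)^{1/2}$ and hence $\Pb(W>s)\le\sqrt2\,s^{-1/2}(1-s)^{(n-1)/2}/\sqrt{\pi(n-1)}$.

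For the finite-sample bound \eqref{ineq.sabre.finite} I would substitute $s=(1+\delta)(1-q)$ with $q:=p^{-2/(n-1)}$. The only algebra needed is the identity $1-s=q\,[1-\delta(q^{-1}-1)]$, after which the elementary inequality $1-x\le e^{-x}$ converts $[1-\delta(q^{-1}-1)]^{(n-1)/2}$ into $\exp\!\big(-\tfrac12\delta(n-1)(q^{-1}-1)\big)$. Since $q^{-1}-1=p^{2/(n-1)}-1$ and the union-bound factor $p=q^{-(n-1)/2}$ cancels exactly against $q^{(n-1)/2}$, collecting the remaining terms reproduces the stated right-hand side.

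The genuinely delicate point is the uniform-in-$n$ statement. Setting $a:=2\log p/(n-1)\in(0,2\log p]$, the logarithm of the right-hand side of \eqref{ineq.sabre.finite} is dominated by $\tfrac a2-\delta\log p\cdot\frac{e^{a}-1}{a}$, and the naive bound $\frac{e^{a}-1}{a}\ge1$ is too weak near $n=2$, where the growing factor $p^{1/(n-1)}=e^{a/2}$ is as large as $p$ itself. I would instead use the sharper $\frac{e^{a}-1}{a}\ge1+\tfrac a2$, which yields $\tfrac a2-\delta\log p\cdot\frac{e^{a}-1}{a}\le-\delta\log p+\tfrac a2(1-\delta\log p)$; once $p>e^{1/\delta}$ the second term is nonpositive for every admissible $a$, so the logarithm is bounded above by $-\delta\log p$ minus a divergent $\tfrac12\log\!\big((n-1)(p^{2/(n-1)}-1)\big)$ term, proving $\sup_{n\ge2}\to0$. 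Finally, for the double limit \eqref{ineq.sabre.large} I would exploit the exact cancellation $p\,q^{(n-1)/2}=1$ at $\delta=0$: the same two-line Beta estimate then gives $\Pb\big(M_{p,n}>\sqrt{1-q}\big)\le\sqrt2/\sqrt{\pi(n-1)(1-q)}$, and a short case analysis (according to whether $a\ge1$ or $a<1$, using $n\to\infty$ in the former and $p\to\infty$ in the latter) shows $(n-1)(1-p^{-2/(n-1)})\to\infty$, whence the probability tends to $1$. I expect the uniform control of the $e^{a/2}$ factor against the exponential decay to be the main obstacle, with everything else being routine.
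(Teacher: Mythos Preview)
Your proposal is correct and follows essentially the same route as the paper: union bound, the Beta tail estimate $\int_{s}^{1}x^{-1/2}(1-x)^{(n-3)/2}\,dx\le \tfrac{2}{n-1}s^{-1/2}(1-s)^{(n-1)/2}$ (the paper's Lemma~\ref{lemma.beta}), the Gamma--ratio inequality $\Gamma(x+\tfrac12)/\Gamma(x)\le\sqrt{x}$, and the substitution $s=(1+\delta)(1-p^{-2/(n-1)})$ together with $1-x\le e^{-x}$. Your treatment of the uniform-in-$n$ step is in fact more explicit than the paper's---the paper simply invokes Lemma~\ref{lemma.uniform} (that $(n-1)(p^{2/(n-1)}-1)\to\infty$ uniformly) without spelling out why this dominates the factor $p^{1/(n-1)}$, whereas your inequality $(e^{a}-1)/a\ge 1+a/2$ makes that domination transparent; similarly, for \eqref{ineq.sabre.large} the paper sets $\delta=0$ and splits into the cases $n/\log p\to\beta>0$ and $n/\log p\to0$, which is the same idea as your $a\ge1$ versus $a<1$ dichotomy.
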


Theorem~\ref{thm.saber} provides an explicit answer to the question at the beginning of Section~\ref{sec:intro} with a probabilistic statement: No matter how $\bL_j$'s are located on the unit sphere, the magnitude of the inner products (or cosines of the angle) between these $p$ points and a uniformly random point cannot exceed $\sqrt{1-p^{-2/(n-1)}}$ with high probability for large $p.$ This upper bound on the inner products is equivalent to a lower bound on the minimal angle between the new random point to the existing $p$ points.

The SABER possesses the following important properties:
\begin{enumerate}
  \item This bound is universal in the sense that it holds for any configuration of $\bL_j$'s.
  \item This bound is uniform in the sense that it holds uniformity for $n \ge 2$.
  \item This bound is sharp in the sense that it can be attained for some configuration of $\bL_j$'s, especially when $\bL_j$'s are i.i.d. uniformly distributed, as will be discussed in Section~\ref{subsec:spur.iid}.
\end{enumerate}
Thus, in an analogy, the SABER $\sqrt{1-p^{-2/(n-1)}}$ is to the distributions of the independent inner products as the fundamental bound $\sqrt{2 \log p}$ is to the $p$-dimensional Gaussian distribution.

A technical note here is that when $n$ is finite, the fraction ${2 \over n-1}$ in the exponent of $p$ can be replaced by ${2 \over n-n_1}$ with any fixed integer $0< n_1<n.$ This change would not alter the asymptotic result in $p$ due to a uniform convergence in the proof. The number $n_1$ only has an effect when the dimension $n$ is finite. For example, see \cite{hero2012} for a similar but different bound when $n$ is fixed. We focus on the bound $\sqrt{1- p^{-2/(n-1)}}$ due to its connection to the $Beta\big({1 \over 2},{ n-1 \over 2}\big)$ distribution. When $n \rightarrow \infty,$ all these bounds are equivalent.

Another technical note is that although Theorem~\ref{thm.saber} is for a deterministic set of $\bL_j$'s, we note here that this set of unit vectors can be random as well. As long as $\bL_j$'s are stochastically independent of $\bU,$ Theorem~\ref{thm.saber} can be applied to random $\bL_j$'s by a conditioning argument on any realization of $\bL_j$'s.

Figure~\ref{figure:sabre} illustrates the SABER $\sqrt{1-p^{-2/(n-1)}}$ in Theorem~\ref{thm.saber} as a function of $n$ and $p$. It can be seen that the SABER has a range of $(0,1)$ as an increasing function in $p$ and a decreasing function in $n.$

\begin{figure}[hhhh]
\begin{center}
\includegraphics[width=3.2in]{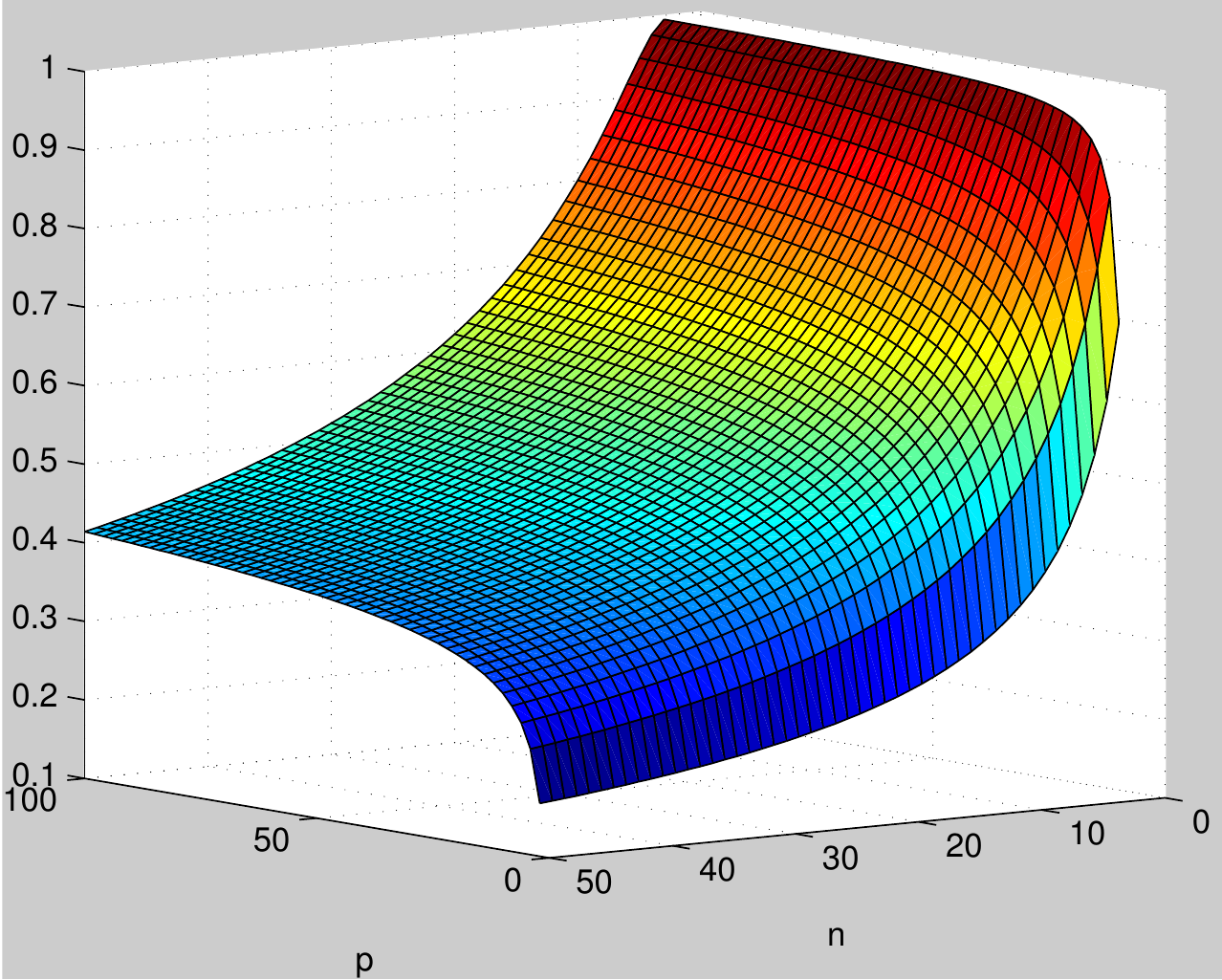}
\end{center}
\caption{The SABER $\sqrt{1-p^{-2/(n-1)}}$ for $3 \le n \le 50$ and $2 \le p \le 100$. The SABER ranges from 0 to 1. The regions of the same color represent the smooth phase transition curves ${\log p \over n} \approx \beta$ for $\beta>0$ as described in Section~\ref{subsec:spur.iid}. }\label{figure:sabre}
\end{figure}

Due to the connection between sample correlations and the inner products \eqref{eq.corr.ind}, this bound is immediately applicable to spurious correlations. Suppose $\bY=(Y_1,\ldots,Y_n)$ records $n$ i.i.d. samples of a Gaussian variable $Y,$ then it is well-known (see \citep{muirhead1982}) that ${\bY-\bar{Y}\one_n \over \|\bY-\bar{Y}\one_n\|_2}$ is a uniformly distributed unit vector over $\cS^{n-2}$. Thus, we have the following bound on the maximal spurious correlation.

\begin{corollary}\label{cor.sabre}
{\bf Sharp Asymptotic Bound for Spurious Correlations (SABRE).}\\
Suppose we observe $n$ i.i.d. samples of arbitrary random variables $X_1,\ldots,X_p$ and a Gaussian variable $Y$ that is independent of $X_j$'s. The maximal absolute sample correlation
$
  M_{XY}=\max_{1 \le j \le p} |\hat{C}(X_j,Y)|
$
satisfies that $\forall \delta>0$, as $p \rightarrow \infty,$
\begin{equation}
    \sup_{n \ge 3}  \Pb\bigg(M_{XY} > \sqrt{(1+\delta)(1-p^{-2/(n-2)})}\bigg) \rightarrow 0.
  \end{equation}
In particular, if $n \rightarrow \infty,$ then we have the double limit
\begin{equation}
 \lim_{p,n \rightarrow \infty} \Pb\bigg(M_{XY}  \le \sqrt{1-p^{-2/(n-2)}} \bigg) = 1.
\end{equation}
\end{corollary}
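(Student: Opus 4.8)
The plan is to derive the SABRE as a direct specialization of the SABER (Theorem~\ref{thm.saber}), the only real work being to recast the sample correlations as inner products of unit vectors on a sphere of the correct dimension. First I would use the identity \eqref{eq.corr.ind} to write, for each $j$, $\hat{C}(X_j,Y)=\langle \bL_j,\bU\rangle$, where $\bL_j$ is the normalized centered sample vector of $X_j$ and $\bU={\bY-\bar{Y}\one_n \over \|\bY-\bar{Y}\one_n\|_2}$, working on the event that the sample variances are positive so that the correlations are well defined. Because of the centering, every $\bL_j$ and $\bU$ is orthogonal to $\one_n$ and hence lies in the $(n-1)$-dimensional subspace $V=\one_n^\perp\subset\RR^n$; identifying $V$ with $\RR^{n-1}$, the unit sphere of $V$ is $\cS^{n-2}$.

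Second, I would invoke the classical fact (see \citep{muirhead1982}) that when $Y$ is Gaussian the normalized centered vector $\bU$ is uniformly distributed over $\cS^{n-2}$, and that, since $Y$ is independent of the $X_j$'s, $\bU$ is stochastically independent of the whole collection $\{\bL_j\}$. The $\bL_j$'s are themselves random here, but this causes no difficulty: by the conditioning remark following Theorem~\ref{thm.saber}, one conditions on an arbitrary realization of $\{\bL_j\}$, which then become deterministic unit vectors on $\cS^{n-2}$ while $\bU$ remains uniform and independent, so the SABER tail bound \eqref{ineq.sabre.finite} applies verbatim. Since the right-hand side of \eqref{ineq.sabre.finite} is free of the configuration of the $\bL_j$'s, the conditional bound is uniform over realizations and survives the outer expectation over $\{\bL_j\}$.

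The one step requiring care rather than difficulty is the dimension bookkeeping. Applying Theorem~\ref{thm.saber} on $\cS^{n-2}$ amounts to substituting the ambient dimension ``$n$'' of the theorem by $n-1$. The threshold $\sqrt{1-p^{-2/(n-1)}}$ then becomes $\sqrt{1-p^{-2/((n-1)-1)}}=\sqrt{1-p^{-2/(n-2)}}$, which matches the exponent $2/(n-2)$ in the statement, and \eqref{ineq.sabre.finite} turns into the same inequality with every occurrence of $n-1$ replaced by $n-2$. The constraint $n-1\ge 2$ demanded by the supremum in Theorem~\ref{thm.saber} is precisely $n\ge 3$, which is why the corollary ranges over $\sup_{n\ge 3}$. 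Taking the supremum over $n\ge 3$ and letting $p\to\infty$ yields the first conclusion, and passing the double limit \eqref{ineq.sabre.large} (again with $n$ replaced by $n-1$) through the same substitution gives the second.

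I expect no genuine obstacle, as the corollary is a specialization of an already established theorem. If anything is subtle, it is simply the loss of one degree of freedom from the centering, which drops the relevant sphere from $\cS^{n-1}$ to $\cS^{n-2}$ and correspondingly shifts the uniform-$n$ supremum from $n\ge 2$ to $n\ge 3$; being explicit about this is the crux of the argument.
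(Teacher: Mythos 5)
Your proposal is correct and follows essentially the same route as the paper: the paper likewise reduces the corollary to Theorem~\ref{thm.saber} by noting via \eqref{eq.corr.ind} that the centered, normalized vectors live on the equatorial sphere $\cS^{n-2}$ orthogonal to $\one_n$, that Gaussianity of $Y$ makes $\bU$ uniform there and independent of the $\bL_j$'s, and then conditions on the (random) $\bL_j$'s so the configuration-free bound applies with $n$ replaced by $n-1$. The dimension bookkeeping ($2/(n-1)\mapsto 2/(n-2)$, $n\ge 2\mapsto n\ge 3$) is exactly the point the paper relies on.
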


Similarly as the interpretation for the SABER, the implication of the SABRE is as follows: Uniformly for $n \ge 3,$ no matter how the $p$ variables $X_1,\ldots,X_p$ are distributed, the magnitude of the sample correlations between $X_j$'s and a Gaussian $Y$ cannot exceed the SABRE with high probability for large $p.$ Note here that in practice, the requirement of Gaussianity of $Y$ can be easily relaxed through a transformation of distributions. Since the SABRE is universal, uniform, and sharp as the SABER, this bound provides a way to distinguish true signals from spurious correlations. We shall investigate this application in future work.

\subsection{Limiting Distributions in the i.i.d. Case}\label{subsec:spur.iid}
In this section, we describe the asymptotics of the maximal inner product when $\bL_j$'s are i.i.d. uniformly distributed and the asymptotics of spurious correlations when $X_j$'s are independently Gaussian distributed. We first observe that when $\bL_j$'s are i.i.d. uniformly unit vectors over $\cS^{n-1},$ then for any random unit vector $\bU$ that is independent of $\bL_j$'s, we have the following two properties about the inner products $\langle \bL_j, \bU\rangle | \bU$:
\begin{enumerate}
  \item Conditioning on $\bU,$ the variables $\langle \bL_j, \bU\rangle | \bU$'s are independent since $\bL_j$'s are independent;
  \item For each $j,$ the variable $|\langle \bL_j, \bU\rangle|^2 | \bU$ is distributed as $Beta\big({1 \over 2},{n-1 \over 2} \big)$. Since this conditional distribution does not depend on $\bU$, it implies that unconditionally $|\langle \bL_j, \bU\rangle|^2 $ is stochastically independent of $\bU.$
\end{enumerate}
From these two properties, we conclude that unconditionally, $|\langle \bL_j, \bU\rangle|^2$'s are i.i.d. $Beta\big({1 \over 2},{n-1 \over 2} \big)$ distributed. We thus show the sharpness of the SABER and SABRE by studying the maximum of i.i.d. $Beta({1 \over 2},{n-1 \over 2})$ variables.
\begin{theorem}\label{thm.sabre.sharp}
\ \begin{enumerate}
  \item ({\bf Sharpness of SABER})\\
Suppose $\bL_j$'s are i.i.d. uniformly distributed over the $(n-1)$-sphere $\cS^{n-1}, $ then for arbitrary random unit vector $\bU$ that is independent of $\bL_j$'s, uniformly for all $n \ge 2$, as $p \rightarrow \infty,$ the random variable $M_{p,n}=\max_j |\langle \bL_j, \bU\rangle|$ has the following convergence:
\begin{equation}
      M_{p,n}/\sqrt{1-p^{-2/(n-1)}} \stackrel{prob.}{\longrightarrow} 1,
\end{equation}
i.e.,
$\forall \delta>0,$ as $p \rightarrow \infty,$
\begin{equation}
 \sup_{n \ge 2} \Pb( |M_{p,n}^2 /(1-p^{-2/(n-1)})-1| > \delta )\longrightarrow 0.
\end{equation}
\item ({\bf Sharpness of SABRE})\\
Similarly, suppose we observe $n$ i.i.d. samples of independent Gaussian variables $X_1,\ldots,X_p$ and an arbitrarily distributed random variable $Y$ that is independent of $X_j$'s. Consider the maximal absolute sample correlation $  M_{XY}=\max_{1 \le j \le p} |\hat{C}(X_j,Y)|.$ Uniformly for all $n \ge 3$, as $p \rightarrow \infty,$ we have
\begin{equation}
  M_{XY} /\sqrt{1-p^{-2/(n-2)}} \stackrel{prob.}{\longrightarrow} 1.
\end{equation}
\end{enumerate}
\end{theorem}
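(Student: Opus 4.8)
Because convergence in probability to a constant amounts to the vanishing of both tails, and Theorem~\ref{thm.saber} already controls the upper tail (it gives $\sup_{n\ge2}\Pb(M_{p,n}^2/(1-p^{-2/(n-1)})>1+\delta)\rightarrow0$), the only new content in part~(1) is the lower-tail estimate: for every $\delta\in(0,1)$, as $p\rightarrow\infty$,
\begin{equation}
\sup_{n\ge2}\Pb\Big(M_{p,n}^2<(1-\delta)(1-p^{-2/(n-1)})\Big)\rightarrow0.
\end{equation}
I would exploit the representation established immediately before the theorem: $W_j:=|\langle\bL_j,\bU\rangle|^2$ are i.i.d.\ $Beta({1\over2},{n-1\over2})$. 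Writing $b=p^{-2/(n-1)}$, $t=(1-\delta)(1-b)$, and letting $F$, $\bar F=1-F$ denote the distribution and survival functions of $Beta({1\over2},{n-1\over2})$, the event above is $\{\max_j W_j<t\}$, so
\begin{equation}
\Pb\Big(\max_{j}W_j<t\Big)=F(t)^p=\big(1-\bar F(t)\big)^p\le\exp\big(-p\,\bar F(t)\big).
\end{equation}
Everything thus reduces to the single claim $\inf_{n\ge2}p\,\bar F(t)\rightarrow\infty$ as $p\rightarrow\infty$.

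The engine of the proof is a lower bound on the Beta survival function that is sharp uniformly in $n$. Starting from the density $f(w)=B({1\over2},{n-1\over2})^{-1}w^{-1/2}(1-w)^{(n-3)/2}$, which is decreasing on $(0,1)$, I would bound $\bar F(t)=\int_t^1 f$ below by the contribution of a short window $[t,t+h]$ with $h\asymp(1-t)/(n-1)$, the natural scale on which the dominant factor $(1-w)^{(n-3)/2}$ changes by only a constant. On this window one also keeps $w^{-1/2}\ge(t+h)^{-1/2}\ge c\,t^{-1/2}$, which matters decisively when $t\rightarrow0$; discarding it (using $w^{-1/2}\ge1$) would lose a factor polynomial in $p$. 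Together with the Stirling estimate $B({1\over2},{n-1\over2})\asymp(n-1)^{-1/2}$, this yields, for a universal constant $c>0$,
\begin{equation}
\bar F(t)\ge{c\over\sqrt{n-1}}\,t^{-1/2}(1-t)^{(n-1)/2}.
\end{equation}

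It remains to feed $t=(1-\delta)(1-b)$ into this bound and verify divergence uniformly across the phase transition in $\log p/n$. Two elementary facts do the work. First, the exact identity ${n-1\over2}\log b=-\log p$ couples the Beta exponent to the growth in $p$. Second, the weighted arithmetic--geometric mean inequality gives $1-t=\delta+(1-\delta)b\ge b^{\,1-\delta}$, hence ${n-1\over2}\log(1-t)\ge(1-\delta)\cdot{n-1\over2}\log b=-(1-\delta)\log p$. Taking logarithms in the displayed bound and using $t(n-1)\le2\log p$ (from $1-b\le2\log p/(n-1)$) to control the $-{1\over2}\log t-{1\over2}\log(n-1)$ terms, I obtain
\begin{equation}
\log\big(p\,\bar F(t)\big)\ge\delta\log p-{1\over2}\log(2\log p)-C,
\end{equation}
a bound free of $n$ that tends to $+\infty$; for bounded $n$ the estimate is in any case trivial since $\bar F(t)$ stays bounded below by a positive constant, so the delicate regime is $n\rightarrow\infty$. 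This establishes the uniform lower-tail estimate and hence part~(1). The main obstacle is precisely this uniformity: a single survival-function estimate must stay sharp as $t$ ranges from near $0$ (when $\log p/n\rightarrow0$) to near $1-\delta$ (when $\log p/n\rightarrow\infty$), which is why one must retain the $w^{-1/2}$ singularity and pair the identity for $\log b$ with the AM--GM inequality rather than bounding either factor away.

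Part~(2) then reduces to part~(1). When $X_1,\dots,X_p$ are independently Gaussian, the centered and normalized vectors $(\bX_j-\bar{X}_j\one_n)/\|\bX_j-\bar{X}_j\one_n\|_2$ are i.i.d.\ uniform on the equator $\cS^{n-2}\subset\RR^{n-1}$ and independent of the normalized $\bY$, so $|\hat{C}(X_j,Y)|^2$ are i.i.d.\ $Beta({1\over2},{n-2\over2})$ --- exactly the setting of part~(1) with $n-1$ replaced by $n-2$. The upper tail is supplied by Corollary~\ref{cor.sabre}, and the lower tail follows from the identical survival-function argument, giving $M_{XY}/\sqrt{1-p^{-2/(n-2)}}\stackrel{prob.}{\longrightarrow}1$ uniformly for $n\ge3$.
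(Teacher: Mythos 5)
Your proposal is correct and follows essentially the same route as the paper: the upper tail is inherited from Theorem~\ref{thm.saber}, and the lower tail is handled by exploiting independence to write $F(t)^p\le\exp(-p\bar F(t))$ and then showing $p\bar F(t)\to\infty$ uniformly in $n$ via a lower bound on the $Beta({1\over2},{n-1\over2})$ survival function of exactly the form $\bar F(t)\gtrsim (n-1)^{-1/2}t^{-1/2}(1-t)^{(n-1)/2}$, which is what the paper extracts from Lemma~\ref{lemma.beta} and the Gamma-ratio inequalities. The only differences are presentational: you rederive that tail bound by a short-window integration of the density rather than citing the integration-by-parts lemma (your aside that the density is decreasing fails for $n=2$, but the window bound survives), and your AM--GM step makes explicit the divergence that the paper leaves as ``easy to see.''
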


Theorem~\ref{thm.sabre.sharp} shows the sharpness of the SABER and the SABRE. It further describes a smooth phase transition of $M_{p,n}$ (also $M_{XY}$) depending on the limit of ${\log p \over n}:$
  \begin{enumerate}[(i)]
    \item If $\lim_{p \rightarrow \infty} \log{p}/n =\infty$, then $    M_{p,n} \stackrel{prob.}{\longrightarrow} 1$  and $M_{p,n}/\sqrt{1-p^{-2/(n-1)}} \stackrel{prob.}{\longrightarrow} 1.$
    \item If $\lim_{p \rightarrow \infty}  \log{p}/n = \beta$ for fixed $0 <\beta < \infty$, then
    $
      M_{p,n} \stackrel{prob.}{\longrightarrow} \sqrt{1-e^{-2\beta}}.
    $
    \item If $\lim_{p \rightarrow \infty} \log{p}/n =0$, then
     $
      M_{p,n} \stackrel{prob.}{\longrightarrow} 0
    $
    and
    $
      M_{p,n} / \sqrt{2 \log p/n}\stackrel{prob.}{\longrightarrow} 1.
    $
  \end{enumerate}

Note in particular that when $\lim_{p \rightarrow \infty} \log{p}/n =0$, the SABRE satisfies
\begin{equation}
\begin{split}
& \sqrt{1-p^{-2/(n-2)}} \\
=& \sqrt{1-e^{-2 \log p /(n-2)}} \\
\sim & \sqrt{1-(1-2 \log p/n )} \\
=& \sqrt{2 \log p /n}.
\end{split}
\end{equation}
The rate $\sqrt{2 \log p /n}$ has appeared in hundreds of books and papers and is very-well known in high-dimensional statistics literature \citep{buhlmann2011}. However, it is just a special case of the general rate $\sqrt{1-p^{-2/(n-2)}}$, which is obtained through the packing perspective. This fact demonstrates the power of this packing approach. In Figure~\ref{figure:sabre}, the smooth phase transition curves ${\log p \over n} \approx \beta$ are represented as regions of the same color.

Below are some geometric intuitions on why the phase transition depends on the limit of ${\log p \over n}$: Note that the number of orthants in $\RR^{n}$ is $2^{n}$ and is growing exponentially in $n.$ Therefore, if the growth of $p$ is faster than the exponential rate in $n,$ then the $p$ unit vectors on $\cS^{n-1}$ would be so ``dense'' that they would cover the sphere, making the magnitude of the maximal inner product converging to 1; if the growth of $p$ is exponential in $n,$ then there would be a constant number (depending on the limit of ${\log p \over n}$) of points in each orthant, so that the new random point would stay around some proper angle to the existing points; if the growth of $p$ is slower than the exponential rate, then many orthants would be empty of points asymptotically, thus the new random point can be almost orthogonal to the existing points.

When $\bL_j$'s are i.i.d. uniformly distributed or when $X_j$'s are independently Gaussian, by combining the results in packing literature \citep{Rankin1955,wyner1967} and classical extreme value theory \citep{leadbetter1983,Haan2006}, we further develop the following uniform convergence in distribution of the corresponding maxima.

\begin{theorem}\label{thm.sabre.iid}
\ \begin{enumerate}
  \item ({\bf Limiting Distribution of the Maximal Independent Inner Product})\\
  Suppose $\bL_j$'s are i.i.d. uniformly unit vectors over $\cS^{n-1}.$ For arbitrary random unit vector $\bU$ that is independent of $\bL_j$'s, consider $M_{p,n} = \max_{1 \le j \le p} |\langle \bL_j, \bU\rangle|$.
Let
$$a_{p,n} = 1- p^{-2/(n-1)} c_{p,n},  ~~~ ~~~ b_{p,n} = {2 \over n-1} p^{-2/(n-1)} c_{p,n},$$
where $c_{p,n}=\big({n-1 \over 2}B\big({1 \over 2},{n-1 \over 2}\big)\sqrt{1-p^{-2/(n-1)}}\big)^{2/(n-1)}$ is a correction factor with $B(s,t)$ being the Beta function. Then for any fixed $x$, as $p \rightarrow \infty,$
{\small
\begin{equation}
\begin{split}
  & \sup_{n \ge 2} \left|\Pb\bigg({M_{p,n}^2 - a_{p,n} \over b_{p,n}} < x\bigg)-I\bigg(x>{n-1 \over 2}\bigg) \right.\\
  &\left. - \exp\bigg(-\bigg(1-{2 \over n-1}x\bigg)^{(n-1)/2}\bigg)I\bigg(x \le {n-1 \over 2}\bigg)\right| \rightarrow 0.
\end{split}
\end{equation}
}
In particular, if $n \rightarrow \infty$ and $p \rightarrow \infty,$ then for any fixed $x$, we have the double limit
\begin{equation}
  \Pb\bigg({M_{p,n}^2 - a_{p,n} \over b_{p,n}} < x\bigg) \rightarrow \exp\big(-e^{-x}\big).
\end{equation}

  \item ({\bf Limiting Distribution of the Maximal Spurious Correlation})\\
  Similarly, suppose we observe $n$ i.i.d. samples of independent Gaussian variables $X_1,\ldots,X_p$ and an arbitrarily distributed random variable $Y$ that is independent of $X_j$'s. Consider the maximal absolute sample correlation $   M_{XY}=\max_{1 \le j \le p} |\hat{C}(X_j,Y)|.$ Then for any fixed $x$, as $p \rightarrow \infty,$
{\small
\begin{equation}
\begin{split}
  & \sup_{n \ge 3} \left|\Pb\bigg({M_{XY}^2 - a_{p,n-1} \over b_{p,n-1}} < x\bigg) -I\bigg(x>{n-2 \over 2}\bigg) \right.\\
  & \left. -\exp\bigg(-\bigg(1-{2 \over n-2}x\bigg)^{(n-2)/2}\bigg)I\bigg(x \le {n-2 \over 2}\bigg)\right| \rightarrow 0.
\end{split}
\end{equation}
}
In particular, if $n \rightarrow \infty$ and $p \rightarrow \infty,$ then for any fixed $x$, we have the double limit
\begin{equation}
  \Pb\bigg({M_{XY}^2 - a_{p,n-1} \over b_{p,n-1}} < x\bigg) \rightarrow \exp\big(-e^{-x}\big).
\end{equation}
\end{enumerate}
\end{theorem}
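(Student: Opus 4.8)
The plan is to reduce the entire statement to the extreme-value analysis of the maximum of $p$ i.i.d.\ $Beta(\tfrac12,\tfrac{n-1}{2})$ variables, and then to upgrade the usual fixed-$n$ convergence to one that holds uniformly over all $n\ge2$. As established earlier, when the $\bL_j$'s are i.i.d.\ uniform on $\cS^{n-1}$ and $\bU$ is independent of them, the squared inner products $|\langle\bL_j,\bU\rangle|^2$ are unconditionally i.i.d.\ $Beta(\tfrac12,\tfrac{n-1}{2})$. Writing $F$ for this Beta CDF, we have the exact identity $\Pb(M_{p,n}^2\le y)=F(y)^p$, so everything reduces to a sharp, $n$-uniform analysis of the upper tail of $F$ near its right endpoint $y=1$.

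First I would pin down that tail. Substituting $s=1-u$ gives $1-F(y)=\frac{1}{B(\tfrac12,\tfrac{n-1}{2})}\int_0^{1-y}(1-u)^{-1/2}u^{(n-3)/2}\,du$, and bounding the slowly varying factor $(1-u)^{-1/2}$ between $1$ and $y^{-1/2}$ on the interval yields
\begin{equation}
\frac{2(1-y)^{(n-1)/2}}{(n-1)B(\tfrac12,\tfrac{n-1}{2})}\le 1-F(y)\le\frac{2\,y^{-1/2}(1-y)^{(n-1)/2}}{(n-1)B(\tfrac12,\tfrac{n-1}{2})}.
\end{equation}
The exact shape factor $(1-y)^{(n-1)/2}$ supplies the Weibull exponent $(n-1)/2$; since the weight $u^{(n-3)/2}$ concentrates the mass near $u=1-y$, a sharper estimate shows the upper bound is attained to leading order, i.e.\ $1-F(y)=\frac{2y^{-1/2}(1-y)^{(n-1)/2}}{(n-1)B(\tfrac12,\tfrac{n-1}{2})}(1+o(1))$ uniformly in $n$. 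I would then substitute $y=a_{p,n}+b_{p,n}x$, so that $1-y=p^{-2/(n-1)}c_{p,n}(1-\tfrac{2}{n-1}x)$ and $[p^{-2/(n-1)}]^{(n-1)/2}=p^{-1}$. The correction factor is engineered so that $c_{p,n}^{(n-1)/2}=\tfrac{n-1}{2}B(\tfrac12,\tfrac{n-1}{2})\sqrt{1-p^{-2/(n-1)}}$ cancels the Beta normalizer, the factor $\tfrac{2}{n-1}$, and --- via its $\sqrt{1-p^{-2/(n-1)}}$ piece, equal to $\sqrt{y}$ at the reference point --- the density's $y^{-1/2}$ factor, collapsing everything to $p(1-F(y))\to(1-\tfrac{2}{n-1}x)^{(n-1)/2}$. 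Passing from $F(y)^p$ to $\exp(-p(1-F(y)))$ via $\log F(y)=-(1-F(y))+O((1-F(y))^2)$ with $1-F(y)=O(1/p)$ gives the limit $\exp(-(1-\tfrac{2}{n-1}x)^{(n-1)/2})$ for $x\le\tfrac{n-1}{2}$; when $x>\tfrac{n-1}{2}$ the point $y$ exceeds $1$ and lies beyond the support, so $\Pb(M_{p,n}^2\le y)=1$, producing the indicator term. The double limit follows since $(1-\tfrac{2}{n-1}x)^{(n-1)/2}\to e^{-x}$ as $n\to\infty$, giving the Gumbel law $\exp(-e^{-x})$.

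The main obstacle is the uniformity over all $n\ge2$. For each fixed $n$ this is a textbook von Mises computation, but $p^{-2/(n-1)}=e^{-2\log p/(n-1)}$ is \emph{not} uniformly small --- it approaches $1$ when $n$ grows with $p$ --- so one cannot simply Taylor-expand in it. I must instead show that the two error sources, namely the replacement of $(1-u)^{-1/2}$ by $y^{-1/2}$ in the incomplete Beta integral and the $O((1-F(y))^2)$ term in the logarithm, contribute discrepancies to the final CDF that vanish uniformly in $n$; the factor $\sqrt{1-p^{-2/(n-1)}}$ inside $c_{p,n}$ is exactly the device making the leading constant correct across all $n$, so that only negligible remainders survive (in the delicate large-$n$ regime the residual ratio behaves like $\log p/(\log p+x)\to1$). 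This step requires explicit $n$-uniform bounds on the incomplete Beta function, combined with the monotonicity of the CDF in $x$ to upgrade pointwise convergence to a clean supremum bound.

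Finally, Part~(2) follows from Part~(1) with no new analysis. After centering, $\tfrac{\bY-\bar Y\one_n}{\|\bY-\bar Y\one_n\|_2}$ is uniform on the $(n-2)$-sphere orthogonal to $\one_n$ and is independent of the directions determined by the independent Gaussian $X_j$'s; hence $\hat C(X_j,Y)^2$ are i.i.d.\ $Beta(\tfrac12,\tfrac{n-2}{2})$. This is exactly the structure of Part~(1) with the sphere dimension lowered by one, i.e.\ with $n-1$ in place of $n$ (so that $\tfrac{(n-1)-1}{2}=\tfrac{n-2}{2}$); substituting $n\to n-1$ into the constants and into the limit gives $a_{p,n-1},b_{p,n-1}$ and the stated limiting law verbatim.
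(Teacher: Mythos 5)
Your route for Part~(1) is essentially the paper's: reduce to $\Pb(M_{p,n}^2\le y)=F(y)^p$ for i.i.d.\ $Beta\big({1 \over 2},{n-1 \over 2}\big)$ squared inner products, obtain a two-sided $n$-uniform estimate $1-F(y)={2 \over (n-1)B({1\over2},{n-1\over2})}y^{-1/2}(1-y)^{(n-1)/2}(1+o(1))$, and let $c_{p,n}$ cancel everything down to $p(1-F(y))\to\big(1-{2\over n-1}x\big)^{(n-1)/2}$, with the indicator branch for $x>{n-1\over2}$ and the limit $e^{-x}$ as $n\to\infty$. This is exactly the paper's argument via Lemma~\ref{lemma.beta} and Lemma~\ref{lemma.constants.beta}. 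The one substantive piece you assert rather than prove is that the upper bound of your sandwich is attained to leading order \emph{uniformly in $n$}: your displayed lower bound is smaller than the upper bound by the factor $y^{1/2}$, and $y=a_{p,n}+b_{p,n}x\to 0$ (not $1$) in the regime $\log p/n\to0$, so the crude sandwich is genuinely insufficient there. The paper closes this with the sharper lower bound ${2((n+2)w-1)\over n^2-1}w^{-3/2}(1-w)^{(n-1)/2}$ of Lemma~\ref{lemma.beta} together with the uniform limit ${(n+2)w_{p,n}-1 \over (n+1)w_{p,n}}\to1$ of Lemma~\ref{lemma.constants.beta}; you have correctly identified that such a lemma is needed but have not supplied it.

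The step that is actually wrong as written is the justification of Part~(2). You claim that ${\bY-\bar Y\one_n \over \|\bY-\bar Y\one_n\|_2}$ is uniform on the equator sphere; for an \emph{arbitrarily} distributed $Y$ this is false --- that uniformity holds when $Y$ is Gaussian, which is the hypothesis of Corollary~\ref{cor.sabre}, not of this theorem. Moreover, even granting it, uniformity of $Y$'s direction alone would not make the $\hat C(X_j,Y)^2$ independent across $j$: they would be $p$ functions of one random direction, which is the SABER setting where only a union bound is available and the product formula $F(y)^p$ fails. What actually drives Part~(2) is the reverse assignment: the centered, normalized $\bX_j$'s are i.i.d.\ uniform on the $(n-2)$-dimensional equator because the $X_j$'s are independent Gaussians, while $Y$'s direction plays the role of the \emph{arbitrary} independent $\bU$ of Part~(1); conditioning on that direction gives independence across $j$ and a conditional $Beta\big({1\over2},{n-2\over2}\big)$ law free of the conditioning, hence the unconditional i.i.d.\ structure. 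With that correction your reduction to Part~(1) with $n\mapsto n-1$ goes through verbatim.
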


Theorem~\ref{thm.sabre.iid} characterizes the uncertainty of the maximal independent inner product and the maximal spurious correlation from the SABER and SABRE respectively. This result possesses the following desirable properties for practice: (1) The convergence of $M_{p,n}$ ($M_{XY}$) is uniform for $n \ge 2$ ($n \ge 3$) and is applicable provided the dataset contains two (three) observations. This uniformity over $n$ is due to the packing perspective. (2) The convergence is arbitrary for any distribution of $Y$. This arbitrariness results from the invariance property of the uniform distribution over the sphere. (3) The convergence is adaptive to the number of variables $p$: Despite the phase transition phenomenon, the normalizing constants $a_{p,n}$ and $b_{p,n}$ adaptively adjust themselves for different $n$ and $p$ to guarantee a good approximation to a proper limiting distribution. (4) Instead of the ``curse of dimensionality,'' the convergence is a ``blessing of dimensionality'': The larger $p$ is, the better the approximation is. These properties make the result widely applicable in the high-dimension-and-low-sample size situations.

We also remark here that for statistical applications, although in principle the empirical distribution of $M_{XY}$ can be simulated based on the Gaussian assumptions, in a large-$p$ situation, for example $p=10^{10}$, such simulation can incur extremely high time and computation cost. On the other hand, these quantiles can be easily obtained through the formulas of $a_{p,n}$ and $b_{p,n}$ for an arbitrary large $p$. Indeed, in modern data analysis, it is more and more often to encounter datasets with a number of variables in millions, billions, or even larger scales \cite{Hall2005}. The uniform-$n$-large-$p$ type asymptotics presented in this paper can be especially useful in these situations.

\section{Rank-Extreme Association of Degenerate Elliptical Vectors}\label{sec:rex}
\subsection{Rank-Extreme Bound of Degenerate Elliptical Vectors}\label{subsec:rex.geom}

In this section we consider the maximal magnitude of an elliptically distributed vector. A $p$-dimensional random vector $\bV$ is said to be elliptically distributed and is denoted as $\bV \sim \cE\cC_p (\bxi,\bTheta)$ if its density $f(\bv)$ satisfies that
\begin{equation}
  f(\bv) \propto g ((\bv-\bxi)^T \bTheta^{-1} (\bv-\bxi))
\end{equation}
for some continuous integrable function $g(\cdot)$ so that its isodensity contours are ellipses. The family of elliptical distributions is a generalization of multivariate Gaussian distributions and is an important and general class of distributions in practice \citep{fang1990symmetric}.

In this paper, we focus on an elliptical distributed vector $\bX \sim \cE\cC_p(\zero,\bSigma)$ with a covariance matrix $\bSigma$ that has unit diagonals. Through a packing argument, we find a functional link between the distribution of $\max_{1 \le j \le p} |X_j|$ and the rank of $\bSigma.$ we thus refer this link as the rank-extreme (ReX) association.

Below are the observations that connect these results to the packing problem: Consider any $p \times p$ covariance matrix $\bSigma$ that is positive semi-definite, has ones on the diagonal, and has rank $d$. Through its eigen-decomposition, we can write $\bSigma=\Lb^T\Lb$, where $\Lb=[\bL_1,\ldots,\bL_p]$ is a $d \times p$ matrix with columns $\bL_j$'s such that $\|\bL_j\|_2 =1$. Thus, we can write $\bX=\Lb^T\bZ$ where $\bZ \sim \cE\cC_d(\zero,\Ib).$ Moreover, for any $\bZ \sim \cE\cC_d(\zero,\Ib)$, if we consider the spherical coordinates, then we have $\bZ=\|\bZ\|_2 \bU$ where $\bU \sim Unif(\cS^{d-1})$. Note that $\|\bZ\|_2$ is a random variable which depends only on $d.$ We thus assume $\|\bZ\|_2$ is a random variable such that ${\|\bZ\|_2^2- u_d \over v_d} \stackrel{dist.}{\longrightarrow} F_\infty$ and ${\|\bZ\|_2^2 \over u_d} \stackrel{prob.}{\longrightarrow} 1$ where $u_d$ and $v_d$ are sequences of constants that depends only on $d,$ and $F_\infty$ is a proper random variable. Note also that $\|\bZ\|_2$ and $\bU$ are independent. Based on the above consideration, we obtain the following decomposition
\begin{equation}\label{eq.decom}
  \|\bX\|_\infty=\max_j |X_j|=\max_j |\langle \bL_j, \bZ \rangle|=\|\bZ\|_2  \max_j |\langle\bL_j, \bU \rangle|.
\end{equation}
Since the distribution of the maximal absolute inner products $ \max_j |\bL_j, \bU |$ is studied in Section~\ref{sec:spur}, we can apply these asymptotic results to study the distribution of $\|\bX\|_\infty=\max_j |X_j|.$ In particular, we develop the following universal bound on a degenerate elliptically distributed vector $\bX$ with a particular case of a degenerate Gaussian vector, where $\|\bZ\|_2^2 \sim \chi^2_d$ with $u_d=d$.

\begin{theorem}\label{thm.rex.arb}
\ \begin{enumerate}
  \item ({\bf ReX Bound for Degenerate Elliptical Vectors})\\
  For any vector of $p$ standard elliptical variables $\bX \sim \cE\cC_p(\zero,\bSigma)$ with $rank(\bSigma)=d$, the random variable $\|\bX\|_\infty=\max_j |X_j|$ satisfies that for any fixed $\delta>0,$
\begin{equation}\label{asymp.k.ell}
 \lim_{p,d \rightarrow \infty} \Pb\bigg(\|\bX\|_\infty  /\sqrt{u_d(1-p^{-2/(d-1)})} > 1+\delta\bigg) = 0.
\end{equation}
  \item ({\bf ReX Bound for Degenerate Gaussian Vectors})\\
  In particular, for any vector of $p$ standard Gaussian variables $\bX \sim \cN_p(\zero,\bSigma)$ with $rank(\bSigma)=d$, the random variable $\|\bX\|_\infty=\max_j |X_j|$ satisfies that for any fixed $\delta>0,$
\begin{equation}\label{asymp.k.gauss}
 \lim_{p,d \rightarrow \infty} \Pb\bigg(\|\bX\|_\infty  /\sqrt{d(1-p^{-2/(d-1)})} > 1+\delta\bigg) = 0.
\end{equation}
If further $d=d(p)$ with $\lim_{p \rightarrow \infty} (\log\log{p})^2d/(\log p)^2 \rightarrow \infty,$ then
\begin{equation}\label{asymp.k.highrank.gauss}
 \lim_{p \rightarrow \infty} \Pb\bigg(\|\bX\|_\infty /\sqrt{d(1-p^{-2/(d-1)})} \le 1\bigg) = 1.
\end{equation}
\end{enumerate}

\end{theorem}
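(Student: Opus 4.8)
The plan is to leverage the multiplicative decomposition \eqref{eq.decom}, namely $\|\bX\|_\infty = \|\bZ\|_2 \cdot M_{p,d}$ where $M_{p,d} = \max_j |\langle \bL_j, \bU\rangle|$, and to control the two factors separately. For the elliptical upper bound \eqref{asymp.k.ell}, I would first normalize by $\sqrt{u_d(1-p^{-2/(d-1)})}$, writing
\begin{equation}
\frac{\|\bX\|_\infty}{\sqrt{u_d(1-p^{-2/(d-1)})}} = \sqrt{\frac{\|\bZ\|_2^2}{u_d}} \cdot \frac{M_{p,d}}{\sqrt{1-p^{-2/(d-1)}}}.
\end{equation}
Each factor converges in probability to $1$: the first by the assumed norm consistency ${\|\bZ\|_2^2/u_d} \stackrel{prob.}{\to} 1$, and the second by the upper bound half of Theorem~\ref{thm.saber} applied conditionally on $\bU$ (using the independence of $\bL_j$'s and $\bU$, and the conditioning remark following Theorem~\ref{thm.saber}). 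Since a product of two factors that are each at most $1+o_P(1)$ is itself at most $1+o_P(1)$, the event that the ratio exceeds $1+\delta$ has probability tending to $0$ as $p,d \to \infty$. The Gaussian specialization \eqref{asymp.k.gauss} is then immediate by substituting $u_d = d$, since $\|\bZ\|_2^2 \sim \chi^2_d$ gives $\chi^2_d/d \stackrel{prob.}{\to} 1$ by the law of large numbers.

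The substantive content is the lower bound \eqref{asymp.k.highrank.gauss}, which asserts sharpness under the rank-growth condition $(\log\log p)^2 d/(\log p)^2 \to \infty$. Here I cannot invoke the uniform-$n$ machinery directly, because sharpness of the SABER (Theorem~\ref{thm.sabre.sharp}) was established for \emph{i.i.d.\ uniform} $\bL_j$'s, whereas here the $\bL_j$'s are the fixed columns arising from the eigendecomposition $\bSigma = \Lb^T\Lb$ and may be arranged adversarially. The key realization is that \eqref{asymp.k.highrank.gauss} only claims a \emph{one-sided} statement (the ratio is at most $1$ with probability tending to $1$), which is a statement \emph{consistent with} the upper bound rather than requiring a matching lower bound on $M_{p,d}$ itself. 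I would therefore read it as the assertion that the normalized quantity does not overshoot $1$ even in the most concentrated direction, and prove it by combining the upper-tail estimate \eqref{ineq.sabre.finite} for $M_{p,d}$ with a matching upper-tail control on $\|\bZ\|_2^2 = \chi^2_d$.

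The main obstacle is the rate bookkeeping that forces the specific growth condition on $d$. The finite-sample bound \eqref{ineq.sabre.finite} gives a tail probability of order $p^{1/(d-1)}\exp(-\tfrac{1}{2}\delta(d-1)(p^{2/(d-1)}-1))$, and to push $\delta$ down to $0$ (so that the ratio lands at exactly $1$ rather than $1+\delta$) I must let $\delta = \delta_p \to 0$ slowly while keeping this bound summable. Writing $p^{2/(d-1)} - 1 \approx \tfrac{2\log p}{d-1}$ in the relevant regime, the exponent behaves like $\delta_p \log p$, so I need $\delta_p \log p \to \infty$; simultaneously the Gaussian factor contributes a multiplicative slack of order $1 + O(\sqrt{(\log\log p)/d})$ from the upper fluctuations of $\chi^2_d/d$. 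The hard part is choosing $\delta_p$ so that it dominates this $\chi^2$ slack \emph{and} keeps the tail bound vanishing; balancing $\delta_p \sim \sqrt{(\log\log p)/d}$ against the requirement $\delta_p \log p \to \infty$ is precisely what yields the condition $(\log\log p)^2 d/(\log p)^2 \to \infty$. I would carry this out by (i) fixing a sequence $\delta_p$ of this order, (ii) bounding $\Pb(\chi^2_d/d > 1+\delta_p/2)$ via a standard chi-square concentration inequality, (iii) bounding $\Pb(M_{p,d}^2/(1-p^{-2/(d-1)}) > 1+\delta_p/2)$ via \eqref{ineq.sabre.finite}, and (iv) combining through a union bound so that both tail probabilities vanish under the stated growth of $d$.
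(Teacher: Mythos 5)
Your treatment of \eqref{asymp.k.ell} and \eqref{asymp.k.gauss} matches the paper's (the paper dismisses these as easy, via the same decomposition $\|\bX\|_\infty=\|\bZ\|_2 M_{p,d}$ and the fact that each factor is at most $1+o_P(1)$), and your reading of \eqref{asymp.k.highrank.gauss} as a one-sided statement proved by a union bound on the two factors is also the paper's strategy. However, your step for \eqref{asymp.k.highrank.gauss} has a genuine gap in the direction of the thresholds. If you only control the upper tails $\Pb\big(M_{p,d}^2/(1-p^{-2/(d-1)})>1+\delta_p/2\big)$ and $\Pb\big(\chi^2_d/d>1+\delta_p/2\big)$, the complementary event only gives $\|\bX\|_\infty^2\le(1+\delta_p/2)^2\,d(1-p^{-2/(d-1)})$, which overshoots $1$ and therefore does not establish $\Pb(\text{ratio}\le 1)\to 1$. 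To land at exactly $1$ you must force one factor to \emph{undershoot}: the paper bounds
\begin{equation*}
\Pb\Big(M_{p,d}>\sqrt{(1-\varepsilon)(1-p^{-2/(d-1)})}\Big)+\Pb\Big(\|\bZ\|_2>\sqrt{(1+\varepsilon)d}\Big),
\end{equation*}
using $(1-\varepsilon)(1+\varepsilon)\le 1$. The point you are missing is that the Bonferroni/beta-tail bound for $M_{p,d}$ still vanishes at the \emph{lower} level $(1-\varepsilon)(1-p^{-2/(d-1)})$ for arbitrary $\bL_j$'s: the bound becomes $p^{1/(d-1)}\exp\big(\tfrac12\varepsilon(d-1)(p^{2/(d-1)}-1)\big)/\sqrt{(d-1)(p^{2/(d-1)}-1)}\approx p^{\varepsilon}/\sqrt{2\log p}$ in the regime $d\gg\log p$, so there is logarithmic room to spare provided $p^{\varepsilon}=o(\sqrt{\log p})$, i.e.\ $\varepsilon\lesssim\log\log p/\log p$.

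Your rate bookkeeping is also backwards as a consequence. Requiring $\delta_p\log p\to\infty$ with $\delta_p\sim\sqrt{(\log\log p)/d}$ yields $d\ll(\log p)^2\log\log p$, an \emph{upper} bound on $d$, whereas the theorem's hypothesis is a \emph{lower} bound $d\gg(\log p/\log\log p)^2$; indeed for $d$ as large as the theorem permits your $\delta_p\log p$ tends to $0$. The correct balance is the reverse: $\varepsilon$ is capped by the $M_{p,d}$ term (the paper takes $\varepsilon=\log\log p/(4\log p)$, giving $p^{\varepsilon}=(\log p)^{1/4}=o(\sqrt{\log p})$), and then the chi-square Chernoff bound $\Pb(\chi^2_d>(1+\varepsilon)d)\le e^{-d\varepsilon^2/6}$ vanishes exactly when $d\varepsilon^2=d(\log\log p)^2/(16(\log p)^2)\to\infty$, which is where the stated condition comes from. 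A final minor point: in part 1 the second factor is only bounded above by $1+o_P(1)$ for arbitrary $\bL_j$'s (convergence in probability to $1$ would require sharpness, which needs i.i.d.\ uniform $\bL_j$'s), and the conditioning should be on the $\bL_j$'s (here deterministic columns from the eigendecomposition), not on $\bU$; neither affects your argument for \eqref{asymp.k.ell}.
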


Similar to the SABER $\sqrt{1-p^{-2/(n-1)}}$, this bound is universal over any correlation structures of rank $d.$ We also show that this bound is sharp, as described in Section~\ref{subsec:rex.iid}.

\subsection{Attainment of the ReX Bound and Related Limiting Distributions}\label{subsec:rex.iid}
The sharpness of the bound in Theorem~\ref{thm.rex.arb} was shown by considering the case when $\bL_j$'s in the decomposition \eqref{eq.decom} are i.i.d. uniformly distributed over $\cS^{d-1}$.
\begin{theorem}\label{thm.rex.iid} ({\bf Sharpness of ReX Bounds})
If $\bL_j$'s are i.i.d. uniformly distributed over the $(d-1)$-sphere $\cS^{d-1}, \forall j$ and are independent of $\bZ \sim \cE\cC_d(\zero,\Ib),$ then as $d\rightarrow \infty$ and $p \rightarrow \infty$,
\begin{equation}
      \max_{j} |\langle \bL_j, \bZ \rangle|/\sqrt{u_d(1-p^{-2/(d-1)})} \stackrel{prob.}{\longrightarrow} 1,
\end{equation}
i.e., $\forall \delta>0,$
\begin{equation}
 \lim_{p,d \rightarrow \infty} \Pb\bigg(\bigg|\max_{j} |\langle \bL_j, \bZ \rangle|  /\sqrt{u_d(1-p^{-2/(d-1)})} -1\bigg| > \delta\bigg) =0.
\end{equation}
In particular, if $\bZ \sim \cN_d(\zero,\Ib),$ then as $d\rightarrow \infty$ and $p \rightarrow \infty$,
\begin{equation}
      \max_{j} |\langle \bL_j, \bZ \rangle|/\sqrt{d(1-p^{-2/(d-1)})} \stackrel{prob.}{\longrightarrow} 1.
\end{equation}
\end{theorem}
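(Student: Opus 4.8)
The plan is to leverage the product decomposition \eqref{eq.decom}, which separates the quantity of interest into a radial factor and an angular factor. Writing $\bZ = \|\bZ\|_2\,\bU$ with $\bU \sim Unif(\cS^{d-1})$ independent of $\|\bZ\|_2$, and setting $M_{p,d} = \max_j |\langle \bL_j, \bU\rangle|$, the decomposition reads $\max_j |\langle \bL_j, \bZ\rangle| = \|\bZ\|_2\, M_{p,d}$. Dividing by the normalizer and factoring gives
\begin{equation}\label{eq.rex.factor}
\frac{\max_j |\langle \bL_j, \bZ\rangle|}{\sqrt{u_d\,(1-p^{-2/(d-1)})}} = \frac{\|\bZ\|_2}{\sqrt{u_d}}\cdot \frac{M_{p,d}}{\sqrt{1-p^{-2/(d-1)}}},
\end{equation}
so it suffices to show that each factor converges to $1$ in probability and then that their product does.

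For the angular factor, the $\bL_j$'s are i.i.d.\ uniform on $\cS^{d-1}$ and independent of $\bU$, so Theorem~\ref{thm.sabre.sharp} (applied with $n = d$) yields $M_{p,d}/\sqrt{1-p^{-2/(d-1)}} \stackrel{prob.}{\longrightarrow} 1$ as $p \to \infty$, uniformly over $d \ge 2$. For the radial factor, the standing assumption $\|\bZ\|_2^2/u_d \stackrel{prob.}{\longrightarrow} 1$ immediately gives $\|\bZ\|_2/\sqrt{u_d} \stackrel{prob.}{\longrightarrow} 1$ as $d \to \infty$. I would then combine the two by a union bound: given the tolerance $\delta>0$ from the statement, choose $\eta>0$ so that $|a-1|<\eta$ and $|b-1|<\eta$ force $|ab-1|<\delta$; the probability that the left side of \eqref{eq.rex.factor} deviates from $1$ by more than $\delta$ is then at most $\Pb(|\|\bZ\|_2/\sqrt{u_d} - 1| > \eta) + \Pb(|M_{p,d}/\sqrt{1-p^{-2/(d-1)}} - 1| > \eta)$.

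The main obstacle is reconciling the two limiting regimes into a single double limit, since the radial convergence is in $d$ alone whereas the angular convergence is in $p$ and must hold with no control on $d$. The uniformity over $n$ built into Theorem~\ref{thm.sabre.sharp} is precisely what forces the second union-bound term to vanish as $p \to \infty$ regardless of how $d$ behaves, so that both terms go to zero in the joint limit $p,d \to \infty$ rather than merely in an iterated limit; without this uniformity one would instead need a diagonal argument coupling $d = d(p)$. The Gaussian specialization is then immediate: for $\bZ \sim \cN_d(\zero,\Ib)$ one has $\|\bZ\|_2^2 \sim \chi^2_d$, hence $u_d = d$ and $\|\bZ\|_2^2/d \stackrel{prob.}{\longrightarrow} 1$ by concentration of the chi-squared law, and substituting $u_d = d$ into \eqref{eq.rex.factor} gives the stated conclusion.
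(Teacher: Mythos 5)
Your proposal is correct and follows the same route the paper intends for this result, which it treats as an immediate corollary: factor via the decomposition \eqref{eq.decom} into the radial term $\|\bZ\|_2/\sqrt{u_d}$ (handled by the standing assumption $\|\bZ\|_2^2/u_d \stackrel{prob.}{\longrightarrow} 1$, i.e.\ $\chi^2_d$ concentration in the Gaussian case) and the angular term $M_{p,d}/\sqrt{1-p^{-2/(d-1)}}$ (handled by Theorem~\ref{thm.sabre.sharp} with $n=d$), then combine by a union bound. Your observation that the uniformity in $n$ of Theorem~\ref{thm.sabre.sharp} is what makes the joint limit $p,d\rightarrow\infty$ work, rather than only an iterated limit, is exactly the right point to emphasize.
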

One remark here is that though each realization of $\bL_j$'s results in a degenerate elliptically distributed $\bX$, unconditionally the joint distribution of $\bX$ is not elliptically distributed. Nevertheless, Theorem~\ref{thm.rex.iid} shows the existence of configurations of $\bL_j$ that attains the bound in Theorem~\ref{thm.rex.arb}.

The limit in Theorem~\ref{thm.rex.iid} indicates the following phase transition for the extreme value in degenerate Gaussian vectors, again depending on the limit of ${\log p \over d}$:
  \begin{enumerate}[(i)]
    \item If $d \rightarrow \infty$ and $\lim_{p \rightarrow \infty} \log{p}/d =\infty$, then
    $
    \max_{j} |\langle \bL_j, \bZ \rangle|/\sqrt{d} \stackrel{prob.}{\longrightarrow} 1.
    $
    \item If $\lim_{p \rightarrow \infty} \log{p}/d = \beta$ for fixed $0 <\beta < \infty$, then
    $
      \max_{j} |\langle \bL_j, \bZ \rangle|/\sqrt{\log{p}} \stackrel{prob.}{\longrightarrow} \sqrt{(1-e^{-2\beta})/\beta}.
    $
    \item If $\lim_{p \rightarrow \infty} \log{p}/d =0$, then
    $
      \max_{j} |\langle \bL_j, \bZ \rangle|/\sqrt{2 \log{p}}\stackrel{prob.}{\longrightarrow} 1.
    $
  \end{enumerate}
Note that the function $f(\beta)=(1-e^{-2\beta})/\beta$ is a smooth function for $\beta>0$ and its range is $(0,2).$ Thus, as the phase transition in Section~\ref{subsec:spur.iid}, the above phase transition is smooth. Moreover, the regime (iii) in the phase transition implies that when the rank $d$ is high compared to $\log p$, the maximum magnitude of a degenerate Gaussian vector can behave as that of i.i.d. Gaussian vectors.

Note that by \eqref{eq.decom}, we have the decomposition of the squared maximum norm
\begin{equation}\label{eq.rex.decom}
\|\bX\|_\infty^2=\max_{1 \le j \le p} |\langle \bL_j, \bZ \rangle|^2= \|\bZ\|_2^2 M_{p,d}^2.
\end{equation}
Thus, by the results in Section~\ref{subsec:spur.iid}, we also develop the following result on the limiting distribution of a degenerate elliptical vector when $\bL_j$'s are i.i.d. uniform.
\begin{theorem}\label{thm.rex.iid.lim}
\ \begin{enumerate}
  \item ({\bf Limiting Distribution of the Maximum of Degenerate Elliptical Vectors})\\
  Suppose $\bL_1,\ldots,\bL_p \stackrel{iid}{\sim} Unif(\cS^{d-1})$ and $\bZ \sim \cE\cC_d(\zero,\Ib)$ with ${\|\bZ\|_2^2- u_d \over v_d} \stackrel{dist.}{\longrightarrow} F_\infty$ for some sequences $u_d$, $v_d$ and a proper random variable $F_\infty$. Then with the constants $a_{p,d}$ and $b_{p,d}$ as in Theorem~\ref{thm.sabre.iid}, the random variable $K_{p,d}=\max_{1 \le j \le p} |\langle \bL_j, \bZ \rangle|^2=\|\bZ\|_2^2 M_{p,d}^2$ has the following limiting distribution:
\begin{enumerate}
  \item If $d$ is fixed and $p \rightarrow \infty,$ then
  $
    K_{p,d} \stackrel{dist.}{\longrightarrow} \|\bZ\|^2_2.
  $
  \item Suppose $d \rightarrow \infty$ and $p\rightarrow \infty.$
  \begin{enumerate}
      \item If $d \rightarrow \infty, p\rightarrow \infty,$ and ${v_d a_{p,d} \over u_d b_{p,d}} \rightarrow \infty,$ then
  $
    {K_{p,d}-u_d a_{p,d} \over v_d a_{p,d}} \stackrel{dist.}{\longrightarrow} F_\infty.
  $
  \item If $d \rightarrow \infty, p\rightarrow \infty,$ and ${v_d a_{p,d} \over u_d b_{p,d}} \rightarrow c $ with $0<c<\infty,$ then
  $
    {K_{p,d}-u_d a_{p,d} \over v_d a_{p,d}} \stackrel{dist.}{\longrightarrow} F_\infty+{1 \over c} H.
  $
  where $H \sim Gumbel(0,1),$ and $F_\infty$ and $H$ are independent.
  \item If $d \rightarrow \infty, p\rightarrow \infty,$ and ${v_d a_{p,d} \over u_d b_{p,d}} \rightarrow 0,$ then
  $
    {K_{p,d}-u_d a_{p,d} \over u_d b_{p,d}} \stackrel{dist.}{\longrightarrow} H
  $
  where $H \sim Gumbel(0,1)$.
  \end{enumerate}
\end{enumerate}

\item ({\bf Limiting Distribution of the Maximum of Degenerate Gaussian Vectors})\\
In particular, if $\bZ \sim \cN_d(\zero,\Ib),$ then the random variable $K_{p,d}$ has the following limiting distribution:
\begin{enumerate}
  \item If $d$ is fixed and $p \rightarrow \infty,$ then
  $
    K_{p,d} \stackrel{dist.}{\longrightarrow} \chi_d^2.
  $
  \item Suppose $d \rightarrow \infty$ and $p\rightarrow \infty.$
  \begin{enumerate}
      \item If $d \rightarrow \infty, p\rightarrow \infty,$ and $(\log{p})^2/d \rightarrow \infty,$ then
  $
    {K_{p,d}-d a_{p,d} \over \sqrt{2d} a_{p,d}} \stackrel{dist.}{\longrightarrow} G
  $
  where $G \sim \cN(0,1).$
  \item If $d \rightarrow \infty, p\rightarrow \infty,$ and $(\log{p})^2/d \rightarrow c $ with $0<c<\infty,$ then
  $
    {K_{p,d}-d a_{p,d} \over \sqrt{2d} a_{p,d}} \stackrel{dist.}{\longrightarrow} G+{1 \over \sqrt{2c}} H
  $
  where $G \sim \cN(0,1),$ $H \sim Gumbel(0,1),$ and $G$ and $H$ are independent.
  \item If $d \rightarrow \infty, p\rightarrow \infty,$ and $(\log{p})^2/d \rightarrow 0,$ then
  $
    {K_{p,d}-d a_{p,d} \over d b_{p,d}} \stackrel{dist.}{\longrightarrow} H
  $
  where $H \sim Gumbel(0,1)$.
  \end{enumerate}
\end{enumerate}
\end{enumerate}
\end{theorem}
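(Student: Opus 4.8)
The plan is to exploit the exact product decomposition $K_{p,d}=\|\bZ\|_2^2\,M_{p,d}^2$ from \eqref{eq.rex.decom} together with the crucial independence of its two factors. Writing $\bZ=\|\bZ\|_2\,\bU$ in spherical coordinates, $\|\bZ\|_2$ is independent of $\bU$, and since $\bL_1,\ldots,\bL_p$ are independent of $\bZ$, the radial factor $\|\bZ\|_2^2$ is independent of $M_{p,d}^2=\max_j|\langle\bL_j,\bU\rangle|^2$. I would record the two standardized summands $W_d=(\|\bZ\|_2^2-u_d)/v_d$ and $T_{p,d}=(M_{p,d}^2-a_{p,d})/b_{p,d}$: by assumption $W_d\stackrel{dist.}{\longrightarrow}F_\infty$ and $(v_d/u_d)W_d\to0$ in probability (this is just $\|\bZ\|_2^2/u_d\to1$), while Theorem~\ref{thm.sabre.iid} gives $T_{p,d}\stackrel{dist.}{\longrightarrow}H\sim Gumbel(0,1)$ in the double limit. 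Independence of the factors makes $(W_d,T_{p,d})$ converge jointly to $(F_\infty,H)$ with independent coordinates, by factorization of characteristic functions.

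The fixed-$d$ case is immediate: as $p\to\infty$ with $d$ fixed we are in the regime $\log p/d\to\infty$, so Theorem~\ref{thm.sabre.sharp} yields $M_{p,d}^2\to1$ in probability, and Slutsky's theorem gives $K_{p,d}\stackrel{dist.}{\longrightarrow}\|\bZ\|_2^2$ (which is $\chi_d^2$ in the Gaussian case). For the double-limit regimes I would expand
\[
K_{p,d}-u_d a_{p,d}=u_d b_{p,d}\,T_{p,d}+v_d a_{p,d}\,W_d+v_d b_{p,d}\,W_d T_{p,d},
\]
and normalize by whichever of $v_d a_{p,d}$ (the ``radial'' scale) or $u_d b_{p,d}$ (the ``extreme'' scale) dominates, as dictated by the ratio $v_d a_{p,d}/(u_d b_{p,d})$. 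In each of the three cases the leading two terms produce the claimed limit, so the task reduces to showing the cross term is asymptotically negligible.

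Controlling the cross term is the main technical point. Normalizing by $v_d a_{p,d}$, its coefficient is $b_{p,d}/a_{p,d}=\tfrac{2}{d-1}\,p^{-2/(d-1)}c_{p,d}/(1-p^{-2/(d-1)}c_{p,d})$, which I would show tends to $0$ as $d\to\infty$ in every sub-regime (the factor $\tfrac{2}{d-1}$ dominates; when $\log p/d\to0$ one uses $1-p^{-2/(d-1)}\sim 2\log p/d$ so the coefficient behaves like $1/\log p$). Normalizing instead by $u_d b_{p,d}$, the cross coefficient is $v_d/u_d$, and $(v_d/u_d)W_d=\|\bZ\|_2^2/u_d-1\to0$ in probability. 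Since $T_{p,d}$ and $W_d$ are each tight, Slutsky's theorem kills the cross term, yielding $F_\infty$, $F_\infty+\tfrac1cH$, or $H$ according to whether $v_d a_{p,d}/(u_d b_{p,d})$ tends to $\infty$, to $c\in(0,\infty)$, or to $0$ (in the middle case the coefficient $u_d b_{p,d}/(v_d a_{p,d})$ of $T_{p,d}$ tends to $1/c$).

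Finally, the Gaussian statement follows by substituting $u_d=d$, $v_d=\sqrt{2d}$, $F_\infty\sim\cN(0,1)$, together with the elementary asymptotic identification $v_d a_{p,d}/(u_d b_{p,d})\sim\sqrt{2(\log p)^2/d}$, which translates the ratio condition into the stated conditions on $(\log p)^2/d$ (with $1/c$ becoming $1/\sqrt{2c}$). Verifying $c_{p,d}\to1$ and checking this last equivalence across the boundary regimes is the only remaining calculation; I expect it to be the fiddliest part, since one must confirm the ratio diverges whenever $(\log p)^2/d\to\infty$ even in the sub-regimes where $\log p/d$ does not tend to $0$.
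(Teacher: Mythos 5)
Your proposal is correct and follows essentially the same route as the paper: the paper's own proof writes $K_{p,d}-u_d a_{p,d}=M_{p,d}^2(\|\bZ\|_2^2-u_d)+u_d(M_{p,d}^2-a_{p,d})$ and appeals to Slutsky's theorem after comparing $v_d a_{p,d}$ with $u_d b_{p,d}$, which is exactly your three-term expansion with $M_{p,d}^2=a_{p,d}+b_{p,d}T_{p,d}$ substituted in. Your write-up actually supplies details the paper leaves implicit --- the independence of the radial and angular factors needed for the joint limit in case (b), the tightness argument killing the cross term, and the identification $v_d a_{p,d}/(u_d b_{p,d})\sim\sqrt{2(\log p)^2/d}$ in the Gaussian case --- so no gap.
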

Theorem~\ref{thm.rex.iid.lim} characterizes the limiting distribution of the squared maximum norm of degenerate elliptical vectors for the entire scope of the rank. The limiting distribution takes on a phase transition phenomenon according to the cross ratio between standardizing constants in the convergence of the norm and the convergence of the maximal squared inner product. This phenomenon is similar as the phase transitions in the classical extreme value theory for correlated random variables \citep{leadbetter1983,adler1990introduction,Haan2006}. When $\bZ$ is standard Gaussian distributed, the limiting distribution can be either $\chi_d^2$, standard Gaussian, a mixture of the standard Gaussian and Gumbel, or Gumbel depending on the relationship between $d$ and $p$.

\section{ReX Detection of Low-Dimensional Linear Dependency}\label{sec:rex.detect}
In this section we consider the problem of detection of low-rank dependency in high-dimensional Gaussian data. Suppose we have $n$ observations of a Gaussian vector $\bW \in \RR^p$ whose covariance matrix $\bSigma$ has rank is $rank(\bSigma)=d \ll p.$ One common technique in estimating $d$ is eigenvalue thresholding based on the principal component analysis (PCA). However, such methods become inaccurate when $n$ is small. Moreover, statistical inference, such as tests and confidence intervals, about $d$ as a parameter is not completely clear.

We propose to apply the rank-extreme association to obtain the information about $d$. We consider the following generating process of the data matrix $\Wb_{n \times p}$ from a factor model:
\begin{equation}
  \Wb_{n \times p} = \one_n \bmu^T+\Zb_{n \times d} \Lb_{d \times p} \Tb_{p \times p} +\sigma \Gb_{n \times p},
\end{equation}
where $\bmu$ is a fixed $p$-dimensional vector, $\Zb_{n \times d}$ has i.i.d. $\cN(0,1)$ entries, $\Lb_{d\times p}$ has columns of unit vectors, $\Tb_{p \times p}$ is a diagonal matrix with positive diagonal elements $\tau_1,\ldots, \tau_p,$ $\Gb_{n \times p}$ has i.i.d. $\cN(0,1)$ entries as the observation noises, and $\sigma \ge 0$ is the standard deviation of the noise. $\Zb$ and $\Gb$ are mutually independent so that each entry $W_{ij}$ is marginally distributed as $\cN(\mu_j,\tau_j^2 + \sigma^2).$ All of the above variables are not observed except for the data matrix $\Wb,$ and our goal is to estimate the rank $d$ with these observations.

Conventional estimate of $d$ is through a proper threshold over the eigenvalues of the sample covariance matrix of $\Wb$. Such an approach requires the eigenvalues to be at least $O(\sigma^2 \sqrt{p \over n})$ for possible detection, as shown in equation (7) and Theorem~1 in \citep{Kritchman200819}. In \citep{Kritchman200819}, the authors consider the case when $p=O(n)$ so that this required magnitude is $O(1).$ In general, to set this required magnitude to be $O(1)$ is equivalent to set $\sigma^2 = O(\sqrt{n/p}).$

In what follows, we introduce our ReX method for the inference of $d$ based on the observed extreme values. We consider both the case when the columns are i.i.d. uniform unit vectors and the general case.

\subsection{The Case When the Columns of $\Lb$ are i.i.d. Uniform Unit Vectors}\label{subsec:rex.detect.iid}
We first consider the case when the columns of $\Lb$ are realizations of i.i.d. uniform unit vectors over $\cS^{d-1}$. To explain our ReX method, we start with the elementary noiseless case when it is known that $\bmu=\zero$, $\sigma=0$, and $\tau_j=1$. In this case, we propose to approximate the asymptotic distribution of the maximal squared entry in each row of $\Wb$ by that of $K_{p,d}$. This approximation is particularly useful when $n \ll p,$ where obtaining the spectrum information is difficult from PCA based methods. The accuracy of the approximation is due to the following two reasons: (1) the theorems in Section~\ref{sec:rex} are for each row of $\Wb$ and have no requirement on $n$; (2) for each row, the condition $\sigma^2 = O(\sqrt{n/p})$ in turn shows that the largest magnitude of noise in each row of $\Wb$ is in the order of $O_p(\sqrt{2\log p}(n/p)^{1/4}).$ Thus, when $n/p \rightarrow 0$, this magnitude is $o_p(1)$ and will not affect the limiting distributions.

Note that for a large $p,$ Theorem~\ref{thm.sabre.iid}, the $\chi^2_d$ distribution, and the generalized extreme value distribution \citep{Haan2006} imply that
{\small
\begin{equation}
\begin{split}
   & \Eb [M_{p,d}^2] \\
  =& \Eb[\max_j |\langle\bL_j, \bU \rangle|^2]\\
             \sim & m_{p,d}:=a_{p,d} + {d-1 \over 2} (1-\Gamma(1+2/(d-1))) b_{p,d}\\
  & \Var [M_{p,d}^2] \\
  \sim & v_{p,d}:={(d-1)^2 b_{p,d}^2 \over 4}\big(\Gamma(1+4/(d-1))-\Gamma(1+2/(d-1))^2\big)
\end{split}
\end{equation}
}
where $a_{p,d}$ and $b_{p,d}$ are as in Theorem~\ref{thm.sabre.iid}. Thus, through \eqref{eq.rex.decom} and Theorem~\ref{thm.rex.iid}:
\begin{equation}\label{eq.rex.ev.approx}
\begin{split}
  \Eb [K_{p,d}] \sim &  E_{p,d}:=d m_{p,d},\\
  \Var [K_{p,d}] \sim & V_{p,d}:=2d (v_{p,d}+m_{p,d}^2) + d^2v_{p,d}.
\end{split}
\end{equation}
Suppose we observe $n$ i.i.d. samples of $K_{p,d}$ which are denoted as $K_{1,p,d},\ldots,K_{n,p,d}.$ By the central limit theorem we have
\begin{equation}
    \sqrt{n} {\bar{K}_{p,d} -E_{p,d} \over \sqrt{V_{p,d}}} \stackrel{dist.}{\longrightarrow} G
\end{equation}
where $\bar{K}_{p,d} = {1 \over n} \sum_{i=1}^n K_{i,p,d}$ and $G \sim \cN(0,1).$ An easy estimate of $d$ is thus the solution of the equation \begin{equation}\label{eq.rex.est}
\bar{K}_{p,d}=E_{p,d}.
\end{equation}

The estimators from this approach usually have a right-skewed distribution, as the distribution of $\chi_d^2$ and $M_{p,n}^2$ are both right-skewed. To reduce the right-skewness in the distribution of $K_{p,d},$ we take the square-root transformation and use the delta method as in \citep{Vaart1998} to obtain the following approximate probabilistic statement
\begin{equation}\label{eq.rex.ci}
\Pb\bigg(\bar{K}_{p,d} \ge \bigg( z_{\alpha} \sqrt{V_{p,d} \over 4nE_{p,d}} + \sqrt{E_{p,d}}\bigg)^2\bigg) \approx 1-\alpha
\end{equation}
where $0<\alpha<1$ and $z_{\alpha}$ is the $\alpha$-quantile of the standard Gaussian distribution. One then solves the inequality \begin{equation}\label{eq.rex.ci.solve}
\bar{K}_{p,d} \ge \bigg( z_{\alpha} \sqrt{V_{p,d} \over 4nE_{p,d}} + \sqrt{E_{p,d}}\bigg)^2
\end{equation}
in $d$ to obtain the $(1-\alpha)$-left-sided confidence interval from $0$ to this solution. Thus, probability statements about an unknown $d$ can be made. Note that $n$ needs not to be larger than $d$ throughout this approach.

Another advantage of the proposed inference method is the speed. Note that through the rank-extreme approach, there is no need of matrix multiplications. By quickly checking the maximal entry in each row, we may get a good sense of the rank as a parameter. Thus, much computation cost can be saved from the rank-extreme approach, and the proposed inference method for $d$ can be used for a fast detection of a low-rank.

When the parameters $\bmu$, $\sigma$, and $\tau_j$'s are unknown, we would need to estimate them. Since we are considering the case when $n$ is small while $p$ is large, the estimation of each component variance $\tau_j^2+\sigma^2$ is difficult. However, when it is known that $\tau_j$'s are equal to some unknown $\tau,$ we can estimate the variance $\tau^2+\sigma^2$ by borrowing strength from all variables. Specifically, we propose the following procedure for the inference of $d$:
\begin{enumerate}
  \item Center each column of $\Wb$ by subtracting the column averages. Denote the resulting data matrix by $\Wb_0.$
  \item Stack the columns of $\Wb_0$ into an $(np) \times 1$ vector and estimate the component standard deviation $\sqrt{\tau^2+\sigma^2}$ with this the sample standard deviation of this vector. Denote the estimate by $s.$
  \item Standardize $\Wb_0$ by dividing $s.$ Denote the resulting data matrix by $\Wb_s.$
  \item Apply the approach in the noiseless case above to $\Wb_s$ for inference about $d.$
\end{enumerate}
The above consideration is also applicable to the situation when the variables can be grouped into several blocks and the component variances within each block are close. Tests of equality variances such as \citep{Bartlett1937} are widely available. We will study the case with unequal variances in future work.

\subsection{General Case}\label{subsec:rex.detect.gen}
In this section we discuss the much more challenging situation when the columns of $\Lb$ are general unit vectors. For simplicity we restrict ourselves in the case when it is known that $\bmu=\zero$, $\sigma=0$, and $\tau_j=1$. We observe that by the decomposition \eqref{eq.decom}, we have the following proposition:
\begin{proposition}\label{cor.chi_inf}
Suppose $\bX \sim \cN_p(\zero,\bSigma)$ where $\bSigma$ has unit diagonals and $rank(\bSigma) =d.$ If there exists a collection of deterministic unit vectors $\bL_j$'s in $\RR^d$ such that $\bSigma= \Lb^T \Lb$ where $\Lb=[\bL_1,\ldots,\bL_p]$ and that for an independent uniformly distributed unit vector $\bU\in \RR^d$, $\max_j |\langle \bL_j,\bU \rangle| \stackrel{prob.}{\longrightarrow} 1$ as $p \rightarrow \infty$, then as $p \rightarrow \infty,$
\begin{equation}
  \max_j |X_j|^2 \stackrel{dist.}{\longrightarrow} \chi_d^2.
\end{equation}
\end{proposition}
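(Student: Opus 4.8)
The plan is to reduce the statement to the decomposition \eqref{eq.decom} and then invoke Slutsky's theorem, exploiting that here the rank $d$ is held fixed while $p\to\infty$. First I would realize $\bX$ concretely: since $\bSigma=\Lb^T\Lb$ with $\Lb=[\bL_1,\ldots,\bL_p]$ a $d\times p$ matrix of unit columns, setting $\bX=\Lb^T\bZ$ with $\bZ\sim\cN_d(\zero,\Ib)$ gives a centered Gaussian vector with covariance $\Lb^T\Ib\,\Lb=\bSigma$, so $\bX\sim\cN_p(\zero,\bSigma)$ as required. Next I would pass to spherical coordinates, writing $\bZ=\|\bZ\|_2\,\bU$ with $\bU=\bZ/\|\bZ\|_2$. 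By rotational invariance of the standard Gaussian, $\bU\sim Unif(\cS^{d-1})$, it is independent of the radius, and $\|\bZ\|_2^2\sim\chi_d^2$. Substituting yields the key identity $\max_j|X_j|^2=\max_j|\langle\bL_j,\bZ\rangle|^2=\|\bZ\|_2^2\,M_{p,d}^2$, where $M_{p,d}=\max_j|\langle\bL_j,\bU\rangle|$ is exactly the maximal inner product against a uniform unit vector appearing in the hypothesis.

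The concluding step would combine the hypothesis with Slutsky's theorem. The hypothesis states $M_{p,d}\stackrel{prob.}{\longrightarrow}1$ as $p\to\infty$, so $M_{p,d}^2\stackrel{prob.}{\longrightarrow}1$ by the continuous mapping theorem. Because $d$ is fixed, the factor $\|\bZ\|_2^2$ is a single $\chi_d^2$ random variable that does not vary with $p$, so trivially $\|\bZ\|_2^2\stackrel{dist.}{\longrightarrow}\chi_d^2$. With one factor converging in distribution and the other converging in probability to the constant $1$, Slutsky's theorem yields $\|\bZ\|_2^2\,M_{p,d}^2\stackrel{dist.}{\longrightarrow}\chi_d^2\cdot1=\chi_d^2$, which is the claim.

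The main point to handle carefully — more a subtlety than a genuine obstacle — is the mode of convergence that makes Slutsky applicable: the multiplicative perturbation $M_{p,d}^2$ must converge to a constant in probability, and it is precisely the hypothesis on the maximal inner product, the kind of convergence established in Theorem~\ref{thm.sabre.sharp} for the i.i.d.-uniform configuration, that supplies this. I would also note that independence of $\|\bZ\|_2$ and $\bU$ is not strictly required for Slutsky but clarifies that the radius and the directional maximum play separate roles, and that no rate control or extra estimates are needed since $d$ does not grow with $p$.
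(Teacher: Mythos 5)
Your proof is correct and follows essentially the same route the paper intends: the proposition is presented as an immediate consequence of the decomposition \eqref{eq.decom}, writing $\max_j|X_j|^2=\|\bZ\|_2^2\,M_{p,d}^2$ with $\|\bZ\|_2^2\sim\chi_d^2$ fixed in $p$ and $M_{p,d}^2\stackrel{prob.}{\longrightarrow}1$ by hypothesis, then applying Slutsky's theorem. Your additional remarks on realizing $\bX=\Lb^T\bZ$ and on the role of the independence of $\|\bZ\|_2$ and $\bU$ are accurate and fill in the details the paper leaves implicit.
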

With this proposition, we convert the inference about $d$ as a parameter to a simple inference problem on the degrees of freedom of a $\chi^2$ distribution. The condition $\max_j |\langle \bL_j,\bU \rangle| \stackrel{prob.}{\longrightarrow} 1$ is a condition on $\bSigma$ as $p \rightarrow \infty.$ It requires that the $p$ vectors $\bL_j$'s be ``densely'' distributed over the unit sphere in $\RR^d$ as $p$ increases, so that the minimal angle between the collection of $\bL_j$'s and the vector $\bU$ converges to 0 as the number of points on the unit sphere increases. The existence of such a $\bSigma$ is shown by the sharpness of the SABER. We aren't able to find a more precise condition on $\bSigma$ to guarantee the convergence as it relates to the challenging question of the optimal configuration of spherical cap packing and spherical code, on which some recent development includes \citep{cohn2014}. However, as long as $\lim_{p \rightarrow \infty} \Pb ( \max_j |\langle \bL_j,\bU \rangle | \ge 1-\delta) \ge 1-\varepsilon$ for some $\delta$ and $\varepsilon$, by conditioning on this event, inference such as confidence intervals can be made about $d$ as a parameter. Unfortunately, as many conditions in statistical literature, neither of these above conditions can be checked in practice. We will consider further analysis on this approach in future work.

\section{Simulation Studies}\label{sec:simu}
In this section we study the performance of the ReX detection of a low-rank from the model in Section~\ref{subsec:rex.detect.iid}. We consider two cases: (1) the case when it is known that $\bmu=\zero$, $\sigma=0$, and $\tau_j=1$ and (2) the case when the unknown component variances $\tau_j^2=\tau^2$ for some unknown $\tau$.

\subsection{Noiseless Case}\label{subsec:simu.noiseless}
In this subsection, we study the performance of the ReX detection when it is known that $\bmu=\zero$, $\sigma=0$, and $\tau_j=1$. We set $p=8000$, $n$ to be from \{10,20,30\}, and $d$ to be from $\{11,16,21\}$. In this case, the estimation of $d$ can be obtained by solving \eqref{eq.rex.est}, and the confidence interval can be obtained by solving \eqref{eq.rex.ci.solve}. We evaluate the performance of the ReX inference for $d$ with two criteria: (1) the sample mean squared error (MSE) of the point estimate of $d$ which is defined by
\begin{equation}
  MSE_{\hat{d}} = {1 \over N} \sum_{k=1}^N (d-\hat{d}_k)^2
\end{equation}
where $N$ is the number of simulations, and $\hat{d}_k$ is the estimate of $d$ from the $k$-th simulated data, $k=1,\ldots,N$; and (2) the coverage and $95\%$ upper bounds for $d$. As a comparison, we also study the MSE of an important PCA-based method, the KN method, proposed in \citep{Kritchman200819} by applying $\Wb$ to the algorithm posted on the authors' website.

Table~\ref{table.noiseless} represents simulation results on the performance of the ReX inference for different $n$'s and $d$'s. The results are based on 1000 simulated datasets. The first block in the table summarizes the MSE of the ReX estimation and the KN estimation. The second block shows the average coverage probability and the mean and median length of 95\% left-sided confidence intervals for $d$. When \eqref{eq.rex.ci.solve} does not have a solution, we record the confidence interval as not covering $d$.

In terms of estimation, although the MSE of the ReX estimation seems larger than that of the KN method in some cases, we noticed that in seven out of nine scenarios the KN method actually returns $n-2$ as an estimate of $d$. Indeed, the consistency of the KN method is shown when $n$ and $p$ are large and comparable, whereas its consistency is not guaranteed in these difficult situations when $p$ is much larger than $n$. In the scenarios in our simulations, the estimations of the KN method are not consistent and can lead to serious problems in practice, particularly when $n<d$. On the other hand, we see from Table~\ref{table.noiseless} that the MSE of the ReX estimation of $d$ gets better as $n$ grows. When the KN method returns better estimates, such as the cases when $n=30$ and $d=11$ or $d=16$, the ReX method has a much smaller MSE.

On the performance of ReX confidence intervals, note that the standard deviation of sample proportion of 1000 Bernoulli trials with success probability $0.95$ is about $0.007.$ Thus, a scenario with an average coverage between $0.936$ and $0.964$ shows a satisfactory confidence interval without being too liberal or too conservative. With this criterion, all ReX confidence intervals are satisfactory except when $n=10$ and $d=21$. In this case, not being able to solve \eqref{eq.rex.ci.solve} is the main reason of not covering $d$ in this difficult situation, see discussions at the end of this section. The length of the ReX confidence intervals is decreasing as $n$ increases. The median lengths are less than the mean lengths, showing the distribution of the upper bound of confidence intervals is indeed right-skewed, as expected in Section~\ref{subsec:rex.detect.iid}.

\begin{table*}[t]
\centering
\caption{Performance of ReX inference for different $n$'s and $d$'s when $p=8000$ in the unit variance and noiseless case. }\label{table.noiseless}
\ra{1.3}
\begin{tabular}{@{}rrrrcrrrcrrr@{}}\toprule
& \multicolumn{3}{c}{$d = 11$} & \phantom{abc}& \multicolumn{3}{c}{$d = 16$} &
\phantom{abc} & \multicolumn{3}{c}{$d = 21$}\\
\cmidrule{2-4} \cmidrule{6-8} \cmidrule{10-12}
& $n=10$ &  $n=20$ & $n=30$ && $n=10$ &  $n=20$ &  $n=30$ & & $n=10$ & $n=20$ & $n=30$ \\ \midrule
\multicolumn{3}{@{}l}{\em MSE of Estimation} \\
ReX & 12.40 & 3.97 &  2.91 && 38.07 & 12.69 & 8.73 && 73.52 & 47.21 & 22.03\\
KN & 9.00 & 49.00 & 148.16 && 64.00 & 4.00 & 143.98 && 169.00& 9.00 & 49.00\\
\multicolumn{3}{@{}l}{\em 95\% left-sided ReX Confidence Interval} \\
Coverage & 0.958 & 0.946 & 0.944 &&  0.951 & 0.949 & 0.950 && 0.926 & 0.937 & 0.952\\
Mean Upper Bound & 18.21 & 15.15 & 14.20 & & 31.41 & 23.66 & 22.13 & & 49.32 & 35.74 & 31.01\\
Median Upper Bound & 16.55 & 14.65 & 13.87 & & 26.27 & 22.34 & 21.48 & & 38.11 & 31.38 & 29.30  \\
\bottomrule
\end{tabular}
\end{table*}

\subsection{Equal Variance Case}\label{subsec:simu.equalv}
In this case, we set $p=8000$, $n$ to be from \{10,20,30\}, and $d$ to be from $\{11,16,21\}$ as in Section~\ref{subsec:simu.noiseless}. We set $\sigma$ to be $(n/p)^{1/4}$ as discussed in Section~\ref{sec:rex.detect}, set $\bmu$ to be a regular sequence of length $p$ from $-5$ to $5$, and set $\tau$ to be $2.$ Table~\ref{table.equalv} shows the results based on 1000 simulated datasets.

\begin{table*}[t]
\caption{Performance of ReX inference for different $n$'s and $d$'s when $p=8000$ in the equal variance with noise case. }\label{table.equalv}
\centering
\ra{1.3}
\begin{tabular}{@{}rrrrcrrrcrrr@{}}\toprule
& \multicolumn{3}{c}{$d = 11$} & \phantom{abc}& \multicolumn{3}{c}{$d = 16$} &
\phantom{abc} & \multicolumn{3}{c}{$d = 21$}\\
\cmidrule{2-4} \cmidrule{6-8} \cmidrule{10-12}
& $n=10$ &  $n=20$ & $n=30$ && $n=10$ &  $n=20$ &  $n=30$ & & $n=10$ & $n=20$ & $n=30$ \\ \midrule
\multicolumn{3}{@{}l}{\em MSE of Estimation} \\
ReX & 0.49 & 0.65 & 0.82 && 1.93  & 1.52 & 1.51 && 6.89 &  4.24 & 3.42\\
KN & 9 & 49 & 3.66 && 64 & 4 & 124.02  && 169 & 9 & 49.00\\
\multicolumn{3}{@{}l}{\em 95\%  left-sided ReX Confidence Interval} \\
Coverage & 1 & 1 & 1 &&  1 & 1 & 1 && 1 & 1 & 1\\
Mean Upper Bound  & 17.74 & 15.87 & 15.18 & & 28.10 & 24.27 & 22.84 & & 41.43 & 34.03 & 31.44 \\
Median Upper Bound  & 17.70 & 15.83 & 15.18 & & 27.77 & 24.21 & 22.76 & & 40.35 & 33.56 & 31.32  \\
\bottomrule
\end{tabular}
\end{table*}
On the estimation, Table~\ref{table.equalv} shows again the problem of PCA based methods when $p$ is much larger than $n$: the KN method returns $n-2$ for seven out of nine scenarios. When $n=30$ and $d=11$ or $d=16$, the KN method returns better estimates, but its MSE is larger than that of the ReX estimation. Note that in these two scenarios for the KN method as well as in all nine scenarios for the ReX method, the MSEs are much smaller than those in Table~\ref{table.noiseless}. One possible reason here is the standardization process. For the ReX method, recall from Section~\ref{subsec:rex.detect.iid} that the distribution of the estimators can be right-skewed. Since the variance estimation from the sample usually underestimates $\tau^2+\sigma^2$, the row maximum $K_{p,d}$ from standardized data can often be larger than that in the noiseless case, leading to a larger estimate of $d$ which offsets the right-skewness in the distribution.

On the ReX confidence intervals, Table~\ref{table.equalv} shows that the coverage probability of them is 1 for all nine scenarios. Although the coverage probability is conservative, the lengths of intervals are reasonably tight. Also, the median upper bounds are usually less than the mean ones, showing again the right-skewness. The problem of right-skewness is much more benign though.
\vskip .1in
In summary, in our simulation studies when $p$ is much larger than $n$, the traditional PCA based methods such as the KN method (1) may have a large MSE in estimating $d$, (2) may not be able to provide confidence intervals for $d$, and (3) requires matrix-wise calculation. On the other hand, the ReX inference (1) has a small MSE in estimation, (2) provides confidence interval statements for $d$, and (3) only needs to scan through the row maxima in the matrix and is thus fast. These results demonstrate the advantages of using the ReX method for the detection of a low-rank structure in high dimensions with a small sample size.

The simulation results also reflect some issues of the ReX method that need further improvements. For example, for some cases in Table~\ref{table.equalv}, the MSE of the ReX method increases as $n$ increases. This problem could be related to the approximation error in \eqref{eq.rex.ev.approx}. Also, the ReX inference are based on solutions of \eqref{eq.rex.est} and \eqref{eq.rex.ci.solve}. Such equations may not have a solution in difficult practical situations (This happens about 1\% of the time when $n=10$ and $d=21$). Although this problem seems to disappear when $n$ is above 10, a more stable algorithm is needed. We shall improve our method in these directions in future work.

\section{Discussions}\label{sec:disc}
We develop a probabilistic upper bound for the maximal inner product between any set of unit vectors and a stochastically independent uniformly distributed unit vector, as well as the limiting distributions of the maximal inner product when the set of unit vectors are i.i.d. uniformly distributed. We demonstrate the applications of these results the problems of spurious correlations and low-rank detections.

We emphasize that we focus our asymptotic theory in the uniform-$n$-large-$p$ paradigm. This type of asymptotics is motivated by the high-dimensional-low-sample-size framework \cite{Hall2005} which is emerging in many areas of science. The proposed packing approach can be especially useful in this framework because (1) finite-sample properties can be studied, and (2) existing packing literature can be applied. In the future, we will continue to explore this type of asymptotics in more general situations. For the theory, we plan to investigate the distribution of the maximal inner products with more generally correlated $\bL_j$'s. One of the applications of the new theory could be a more accurate detection method of a low rank. We also plan to improve and generalize the ReX detection method in the case when $\tau_j$'s are different, as well as in the case when the data are not Gaussian distributed.

\appendices

\section{Technical Lemmas}
We provide some key proofs in the appendix. Proofs of other results are immediate corollaries of these results. We start with the key observation that the distribution of each $|\langle \bL_j,\bU \rangle|^2$ is $Beta(1/2,(n-1)/2),$ as discussed at the beginning in Section~\ref{subsec:spur.uub} and also in \cite{muirhead1982}. Based on this fact, we first derive a lemma on the tail bounds of the $Beta(1/2,(n-1)/2)$ distribution. This lemma is proved by integration by parts, and the details are omitted.

\begin{lemma}\label{lemma.beta}
    For $0< w \le 1,$ we have the following bounds for an incomplete beta integral:
    \begin{equation}\label{ineq.beta}
    \begin{split}
     & {2((n+2)w-1) \over (n^2-1)} w^{-3/2}(1-w)^{n-1 \over 2}  \\
     \le &    \int_{w}^1 s^{-{1 \over 2}} (1-s)^{{n-3 \over 2}} ds \\
     \le & {2 \over (n-1)} w^{-1/2}(1-w)^{n-1 \over 2}
    \end{split}
    \end{equation}
\end{lemma}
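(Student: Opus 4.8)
The plan is to prove both inequalities by integration by parts, dispatching the upper bound with a single application and the lower bound with a second application to the leftover integral. Write $I(w) = \int_w^1 s^{-1/2}(1-s)^{(n-3)/2}\,ds$. First I would integrate by parts with $u = s^{-1/2}$ and $dv = (1-s)^{(n-3)/2}\,ds$, so that $v = -\frac{2}{n-1}(1-s)^{(n-1)/2}$ and $du = -\frac{1}{2}s^{-3/2}\,ds$. The boundary contribution at $s=1$ vanishes because the exponent $(n-1)/2$ is positive for $n \ge 2$ while the factor $s^{-1/2}$ stays bounded there, and at $s=w$ it contributes $\frac{2}{n-1}w^{-1/2}(1-w)^{(n-1)/2}$. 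This yields the identity
\[
I(w) = \frac{2}{n-1}w^{-1/2}(1-w)^{(n-1)/2} - \frac{1}{n-1}\int_w^1 s^{-3/2}(1-s)^{(n-1)/2}\,ds.
\]
Since the remaining integrand is strictly positive on $(w,1)$, simply discarding the subtracted term gives the upper bound $I(w) \le \frac{2}{n-1}w^{-1/2}(1-w)^{(n-1)/2}$.

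For the lower bound I would apply the same device to the auxiliary integral $J(w) = \int_w^1 s^{-3/2}(1-s)^{(n-1)/2}\,ds$, now taking $u = s^{-3/2}$ and $dv = (1-s)^{(n-1)/2}\,ds$. The integrated boundary term again vanishes at $s=1$ and leaves $\frac{2}{n+1}w^{-3/2}(1-w)^{(n+1)/2}$ at $s=w$, minus a positive residual integral, so dropping that residual gives $J(w) \le \frac{2}{n+1}w^{-3/2}(1-w)^{(n+1)/2}$. Substituting this back into the identity for $I(w)$, factoring out $\frac{2}{n-1}w^{-3/2}(1-w)^{(n-1)/2}$, and simplifying the bracket via $w - \frac{1-w}{n+1} = \frac{(n+2)w-1}{n+1}$ produces
\[
I(w) \ge \frac{2}{n-1}\,w^{-3/2}(1-w)^{(n-1)/2}\,\frac{(n+2)w-1}{n+1} = \frac{2((n+2)w-1)}{n^2-1}\,w^{-3/2}(1-w)^{(n-1)/2},
\]
using $n^2-1 = (n-1)(n+1)$, which is exactly the claimed lower bound.

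The arithmetic here is entirely routine, so there is no serious obstacle; the points requiring a little care are (i) verifying that each integrated boundary term genuinely vanishes at $s=1$, which is where the hypothesis $n \ge 2$ enters through the positivity of the exponent of $(1-s)$, and (ii) observing that the lower bound is informative only when $(n+2)w > 1$ and is otherwise automatically valid, since then its right-hand side is nonpositive while $I(w) > 0$. The one conceptual step worth recording is the recognition that a single integration by parts settles the upper bound directly, whereas a second integration by parts on the residual integral — again throwing away a positive tail — is precisely what sharpens the estimate into the stated lower bound.
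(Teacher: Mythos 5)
Your proof is correct, and it follows exactly the route the paper indicates (the paper states the lemma "is proved by integration by parts" and omits the details): one integration by parts yields the upper bound by discarding the positive residual, and a second integration by parts on that residual, again discarding its positive tail, yields the stated lower bound after the algebraic simplification $w - \tfrac{1-w}{n+1} = \tfrac{(n+2)w-1}{n+1}$. Nothing is missing.
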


We also find a lemma on the uniform convergence of the function $(n-1)(p^{2/(n-1)}-1).$ This lemma is important for the uniform convergence in the paper. The proof is easy and is omitted.
\begin{lemma}\label{lemma.uniform}
Uniformly for any $n \ge 2,$ as $p \rightarrow \infty,$ $(n-1)(p^{2/(n-1)}-1) \rightarrow \infty.$
\end{lemma}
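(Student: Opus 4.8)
The plan is to produce a lower bound on $(n-1)(p^{2/(n-1)}-1)$ that is free of $n$ and diverges as $p \to \infty$; such a bound delivers the claimed uniform convergence in one stroke. First I would rewrite the power in exponential form, $p^{2/(n-1)} = \exp\!\big(\tfrac{2}{n-1}\log p\big)$, and introduce the shorthand $u = \tfrac{2}{n-1}\log p > 0$, so that the quantity of interest becomes $(n-1)(e^{u}-1)$.

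The key step is the elementary convexity inequality $e^{u} - 1 \ge u$, valid for every $u \ge 0$. Substituting $u = \tfrac{2}{n-1}\log p$ yields $p^{2/(n-1)} - 1 \ge \tfrac{2}{n-1}\log p$, and multiplying both sides by $n-1 > 0$ cancels the denominator to give
\begin{equation*}
(n-1)\big(p^{2/(n-1)}-1\big) \ge 2\log p \qquad \text{for all } n \ge 2.
\end{equation*}
Since the lower bound $2\log p$ does not depend on $n$ and tends to $\infty$ as $p \to \infty$, the divergence of $(n-1)(p^{2/(n-1)}-1)$ is uniform over $n \ge 2$, which is precisely the assertion.

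There is no genuine obstacle here, consistent with the remark that the proof is easy: the only point requiring a moment's thought is the decision to bound the expression from below by the single linear term of the exponential series, rather than attempting an $n$-by-$n$ analysis that would obscure the uniformity. Once the substitution is made, the inequality $e^{u} \ge 1+u$ finishes the argument immediately. Equivalently, one may observe that $\tfrac{e^{u}-1}{u} \ge 1$ and write the expression as $2\log p \cdot \tfrac{e^{u}-1}{u}$, which exhibits both the dominating factor $2\log p$ and the harmless, uniformly bounded-below correction $\tfrac{e^u-1}{u}$.
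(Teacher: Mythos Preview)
Your proof is correct: the inequality $e^{u}-1\ge u$ with $u=\tfrac{2}{n-1}\log p$ gives the $n$-free lower bound $2\log p$, which is exactly what uniform divergence requires. The paper omits its own proof of this lemma, so there is nothing to compare against, but your argument is precisely the kind of one-line verification the authors presumably had in mind.
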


We derive below a lemma summarizing the uniform convergence of standardizing constants in the theorems. Their proofs are routine analysis and are omitted.

\begin{lemma}\label{lemma.constants.beta}
Consider the sequences $a_{p,n} = 1- p^{-2/(n-1)} c_{p,n}$, $b_{p,n} = {2 \over n-1} p^{-2/(n-1)} c_{p,n}$ in Theorem~\ref{thm.sabre.iid} where $c_{p,n}=\big({n-1 \over 2}B\big({1 \over 2},{n-1 \over 2}\big)\sqrt{1-p^{-2/(n-1)}}\big)^{2/(n-1)}$ is a correction factor. For any fixed $x,$ let $w_{p,n}=a_{p,n}+b_{p,n}x.$ We have the following asymptotic results:
\begin{enumerate}
  \item Uniformly for any $n \ge 2$,  as $p \rightarrow \infty,$  $c_{p,n} / \big({n-1 \over 2}B\big({1 \over 2},{n-1 \over 2}\big)\big)^{2/(n-1)} \rightarrow 1,$  $b_{p,n} \rightarrow 0$, ${a_{p,n} \over 1-p^{-2/(n-1)}} \rightarrow 1,$ and ${b_{p,n} \over a_{p,n}} \rightarrow 0$.
  \item Uniformly for any $n \ge 2$, as $p \rightarrow \infty,$  ${(n+1)w_{p,n} \over (n+2)w_{p,n}-1} I\big(x \le {n-1 \over 2}\big) +I\big(x > {n-1 \over 2}\big) \rightarrow 1$.
\end{enumerate}
\end{lemma}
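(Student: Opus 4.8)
The plan is to reduce every assertion to an elementary estimate of a single real variable by substituting $s=p^{-2/(n-1)}$ and $y=\frac{2\log p}{n-1}\in(0,2\log p]$, so that $s=e^{-y}$, and then dividing by $\log p$. Throughout, write $C_n=\frac{n-1}{2}B\!\left(\frac12,\frac{n-1}{2}\right)$, so that $c_{p,n}=\big(C_n\sqrt{1-s}\big)^{2/(n-1)}$ and $\log c_{p,n}=\frac{1}{n-1}\log\big(C_n^2(1-s)\big)$. The single workhorse inequality is an elementary strengthening of Lemma~\ref{lemma.uniform}: since $e^y-1\ge y$,
\begin{equation}
(n-1)\big(p^{2/(n-1)}-1\big)=(n-1)(e^y-1)\ge(n-1)y=2\log p
\end{equation}
uniformly for $n\ge 2$. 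I also use the Stirling estimate $C_n\sim\sqrt{\pi n/2}$, i.e. $\log C_n^2=\log(n-1)+O(1)$ uniformly, which follows from $\Gamma\big(\frac{n-1}{2}\big)/\Gamma\big(\frac n2\big)\sim\sqrt{2/n}$.

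For the three easy limits in part~(1), note first that $c_{p,n}/C_n^{2/(n-1)}=(1-s)^{1/(n-1)}$, whose logarithm is $\frac{y}{2\log p}\log(1-e^{-y})$; since $y\mapsto y\,|\log(1-e^{-y})|$ is bounded on $(0,\infty)$ (it vanishes like $y|\log y|$ at $0$ and like $ye^{-y}$ at $\infty$), this logarithm is $O(1/\log p)\to0$ uniformly. This gives the first claim and, combined with $C_n^{2/(n-1)}=\exp\!\big(\frac{y}{\log p}\log C_n\big)\to1$, the uniform boundedness of $c_{p,n}$ between positive constants, say $c_{p,n}\le M$. Then $b_{p,n}=\frac{2}{n-1}s\,c_{p,n}\le M\frac{ye^{-y}}{\log p}\le M/(e\log p)\to0$ using $ye^{-y}\le1/e$. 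Finally, granting $a_{p,n}\sim 1-s$ from the next paragraph, $\frac{b_{p,n}}{a_{p,n}}\sim\frac{2c_{p,n}}{(n-1)(p^{2/(n-1)}-1)}\le M/\log p\to0$ by the displayed inequality.

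The delicate limit, and the main obstacle, is $a_{p,n}/(1-s)\to1$, equivalently
\begin{equation}
\frac{a_{p,n}}{1-s}-1=\frac{s(1-c_{p,n})}{1-s}=\frac{s}{1-s}\big(1-e^{u}\big),\qquad u=\log c_{p,n}.
\end{equation}
Since $|1-e^u|\le|u|e^{|u|}$ and $e^{|u|}$ is bounded ($c_{p,n}$ being bounded away from $0$ and $\infty$), it suffices to bound $\frac{s}{1-s}|u|=\frac{|\log(C_n^2(1-s))|}{(n-1)(p^{2/(n-1)}-1)}\le\frac{|\log(C_n^2(1-s))|}{2\log p}$. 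The crux is the uniform estimate $|\log(C_n^2(1-s))|=O(\log\log p)$, which rests on a cancellation: by Stirling $\log C_n^2=\log(n-1)+O(1)$, while in the large-$n$ (small-$y$) regime $\log(1-s)=\log(1-e^{-y})=\log y+O(y)=\log(2\log p)-\log(n-1)+O(y)$, so the two $\log(n-1)$ terms cancel and leave $\log(2\log p)+O(1)$; for bounded $n$ both terms are $O(1)$. Hence the ratio is $O(\log\log p/\log p)\to0$ uniformly. Verifying this cancellation with fully uniform error control across all $n\ge2$ simultaneously is the one genuinely technical point.

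For part~(2), on the event $x>\frac{n-1}{2}$ the indicator expression equals $1$ identically, so only $x\le\frac{n-1}{2}$ matters; there I use the algebraic identity
\begin{equation}
\frac{(n+1)w_{p,n}}{(n+2)w_{p,n}-1}-1=\frac{1-w_{p,n}}{(n+2)w_{p,n}-1}.
\end{equation}
Writing $w_{p,n}=1-\rho$ with $\rho=s\,c_{p,n}\big(1-\frac{2x}{n-1}\big)$, the right-hand side equals $\big[(n+1)\frac{1-\rho}{\rho}-1\big]^{-1}$, and because $c_{p,n}\to1$ and $\frac{2x}{n-1}\to0$ on this event one has $\rho\sim s$, whence $(n+1)\frac{1-\rho}{\rho}$ is comparable to $(n+1)(p^{2/(n-1)}-1)\ge(n-1)(p^{2/(n-1)}-1)\to\infty$ by Lemma~\ref{lemma.uniform}. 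Thus the expression tends to $1$ uniformly. The only care needed is the uniform positivity of the denominator in the regime $s\to1$, which is precisely where the sharpened relation $\rho\sim s$ (rather than the crude $\rho\le Ms$) is required.
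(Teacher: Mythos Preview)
The paper does not actually prove this lemma: it states only that ``Their proofs are routine analysis and are omitted.'' Your write-up therefore goes well beyond what the paper supplies, and the overall strategy---the substitution $s=p^{-2/(n-1)}$, $y=2\log p/(n-1)$, the quantitative form $(n-1)(p^{2/(n-1)}-1)\ge 2\log p$ of Lemma~\ref{lemma.uniform}, and the Stirling-based cancellation $\log C_n^2+\log(1-s)=O(\log\log p)$ for the delicate limit $a_{p,n}/(1-s)\to1$---is correct and efficient.

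Two imprecisions are worth flagging, though neither breaks the argument. First, you write $C_n^{2/(n-1)}\to1$; this is false for fixed $n$ (for instance $C_2^{2}=\pi^2/4$), but harmless since you only use uniform boundedness of $c_{p,n}$, which follows from $\sup_{n\ge2}\frac{\log C_n}{n-1}<\infty$. Second, in part~(2) you assert that $c_{p,n}\to1$ and $\frac{2x}{n-1}\to0$ ``on this event,'' hence $\rho\sim s$; none of these hold for fixed $n$ as $p\to\infty$. What is true---and sufficient---is that on $\{x<(n-1)/2\}$ the factor $c_{p,n}\big(1-\frac{2x}{n-1}\big)$ is uniformly bounded between two positive constants depending only on $x$, so $\rho$ and $s$ are \emph{comparable}. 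Combined with $1-\rho=w_{p,n}\sim 1-s$ (already established in part~(1)), this yields
\[
(n+1)\,\frac{1-\rho}{\rho}\;\gtrsim\;(n-1)\big(p^{2/(n-1)}-1\big)\;\ge\;2\log p\;\longrightarrow\;\infty,
\]
which is exactly what is needed for $\frac{(n+1)w_{p,n}}{(n+2)w_{p,n}-1}\to1$. With these cosmetic fixes your argument is complete.
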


\section{Proofs in Section~\ref{sec:spur}}
\begin{proof}[Proof of Theorem~\ref{thm.saber}]

To show \eqref{ineq.sabre.finite}, note that for $\delta \ge 1/(p^{2/(n-1)}-1),$ $(1+\delta)(1-p^{-2/(n-1)}) \ge 1,$ thus the bound is trivial. Therefore, it is enough to show the convergence for any $\delta$ that $0<\delta < 1/(p^{2/(n-1)}-1)$. Similarly as the proof of Theorem~6.3 in \citep{berk2013a}, by Lemma~\ref{lemma.beta} and the inequalities that $\Gamma(x+1/2)/\Gamma(x) < \sqrt{x}$ as in \citep{jameson2013}, we have
{\small
\begin{equation}\label{eq.bonf}
\begin{split}
   &~~ \Pb\bigg(\, \max_{j} | \langle \bL_j, \bU \rangle | > \sqrt{(1+\delta)(1-p^{-2/(n-1)})}\bigg) \\
  ~~\le&~~ p \Pb \bigg(\, |\langle \bL_j, \bU \rangle| > \sqrt{(1+\delta)(1-p^{-2/(n-1)})}\bigg)      \\
 ~~\le & p \sqrt{2 \over (n-1) \pi} {(1- (1+\delta)(1-p^{-2/(n-1)}))^{(n-1)/2} \over \sqrt{(1+\delta)(1-p^{-2/(n-1)})}}  \\
 = & \sqrt{2 \over \pi (1+\delta)} {p^{1/(n-1)} \bigg(1-\delta(p^{2/(n-1)}-1)\bigg)^{(n-1)/2} \over \sqrt{(n-1)(p^{2/(n-1)}-1)}} \\
 \le & \sqrt{2 \over \pi (1+\delta)} {p^{1/(n-1)}\exp\bigg(-{1 \over 2}\delta(n-1)(p^{2/(n-1)}-1)\bigg) \over \sqrt{(n-1)(p^{2/(n-1)}-1)}} \\
\end{split}
\end{equation}
}
Thus, by Lemma~\ref{lemma.uniform},
\begin{equation}
  \Pb\bigg(\max_j |\langle \bL_j, \bU \rangle| > \sqrt{(1+\delta)(1-p^{-2/(n-1)})}\bigg) \rightarrow  0
\end{equation}
as $p \rightarrow \infty$ regardless of $n$.

To see \eqref{ineq.sabre.large}, note that if $\lim_{p \rightarrow \infty}n/\log{p}=\beta >0,$ then $p^{1/(n-1)} \rightarrow e^{1/\beta}<\infty.$ Thus we may set $\delta =0$ to get \eqref{ineq.sabre.large}. Also, if $n \rightarrow \infty$ but $n/\log{p} \rightarrow 0,$ then \eqref{eq.bonf} is further bounded by $\sqrt{2 \over \pi (n-1)}(1+o(1)).$ Thus we have \eqref{ineq.sabre.large}.
\end{proof}

\begin{proof}[Proof of Theorem~\ref{thm.sabre.sharp}]
Since we already have the upper bound, it is enough to show that for any fixed $\delta$ such that $0<\delta <1/2$,
\begin{equation}
  \Pb\bigg(\max_j |\langle \bL_j,\bU \rangle| < \sqrt{(1-\delta)(1-p^{-2/(n-1)})}\bigg) \rightarrow   0. \label{eq.1b}
\end{equation}
By the independence discussed at the beginning of Section~\ref{subsec:spur.iid}, we have that for $p \rightarrow \infty,$
{\small
\begin{equation}\label{eq.ind}
\begin{split}
   &~~ \Pb\bigg(\, \max_{j} | \langle \bL_j,\bU \rangle | < \sqrt{(1-\delta)(1-p^{-2/(n-1)})}\bigg) \\
  ~~=&~~ \bigg(\Pb \bigg(\, |\langle \bL_j,\bU \rangle| < \sqrt{(1-\delta)(1-p^{-2/(n-1)})}\bigg) \bigg)^p     \\
  ~~\le&~~ \exp\bigg(-p \Pb \bigg(\, |\langle \bL_j,\bU \rangle| > \sqrt{(1-\delta)(1-p^{-2/(n-1)})}\bigg) \bigg).
\end{split}
\end{equation}
}
We will lower-bound the absolute value of the exponent in \eqref{eq.ind}. By the lower bound in Lemma~\ref{lemma.beta} and the inequality that $\Gamma(x+1)/\Gamma(x+1/2) > \sqrt{x+1/4}$ as in \citep{jameson2013}, we have
{\small
\begin{equation}\label{eq.bound.kstar.lower}
  \begin{split}
    & p \Pb \bigg(\, |\langle \bL_j,\bU \rangle| > \sqrt{(1-\delta)(1-p^{-2/(n-1)})}\bigg)\\
    = & {p \over B\big(1/ 2,(n-1)/ 2\big)} \int_{(1-\delta) (1-p^{-2/(n-1)})}^1 \!\!\!\!\!\!\!\!\!\!\!\!\!\!\!x^{-1/2} (1-x)^{(n-3)/2} dx \\
  \ge & \sqrt{1 \over \pi (1-\delta)^3}{\sqrt{2n-3}\over n^2-1}{ p^{1/(n-1)} \over \sqrt{(p^{2/(n-1)}-1)^3}} \cdot \\
  & \big(1+\delta(p^{2/(n-1)}-1)\big)^{(n-1)/2}\big((1-\delta)n(p^{2/(n-1)}-1)-1\big) \\
  \ge & \sqrt{ 2 \over \pi(1-\delta)} {p^{1/(n-1)} \big(1+\delta(p^{2/(n-1)}-1)\big)^{(n-1)/2} \over \sqrt{n(p^{2/(n-1)}-1)}}  (1 +o(1)).
  \end{split}
\end{equation}
}
In the last step of \eqref{eq.bound.kstar.lower}, we use Lemma~\ref{lemma.uniform} again. It is now easy to see that
\begin{equation}
  p \Pb \bigg(\, |\langle \bL_j,\bU \rangle| > \sqrt{(1-\delta)(1-p^{-2/(n-1)})}\bigg) \rightarrow \infty
\end{equation}
as $p \rightarrow \infty$ regardless of the rate of $n=n(p)$, which completes the proof of Theorem~\ref{thm.sabre.sharp}.
\end{proof}

\begin{proof}[Proof of Theorem~\ref{thm.sabre.iid}]
If $x \ge (n-1)/ 2,$ then $a_{p,n} +b_{p,n} x \ge 1$ and the result is trivial. For $x< (n-1)/2,$ by Lemma~\ref{lemma.beta} and Lemma~\ref{lemma.constants.beta}, uniformly for any $n \ge 2,$ as $p \rightarrow \infty,$
{\small
\begin{equation}
  \begin{split}
       & -\log \Pb \bigg({M_{p,n}^2 - a_{p,n} \over b_{p,n}} < x\bigg) \\
    =& -\log \{ \Pb(|\langle \bL_j,\bU \rangle|^2 < b_{p,n} x + a_{p,n})^p\} \\
    =&  {2 p(1-a_{p,n} -b_{p,n}x)^{(n-1)/2} \over B\big({1 \over 2},{n-1 \over 2}\big)(n-1)\sqrt{a_{p,n} +b_{p,n} x} }(1+o(1))\\
    =& {2  p \big(1-1+c_{p,n} p^{-2 \over (n-1)}-{2 \over n-1}c_{p,n} p^{-2 \over (n-1)} x \big)^{(n-1) \over 2} \over B\big({1 \over 2},{n-1 \over 2}\big)(n-1)\sqrt{a_{p,n}(1+o(1))} }(1+o(1))  \\
    =& \big(1-{2 \over n-1} x\big)^{(n-1)/2} (1+o(1)).
  \end{split}
\end{equation}
}
When $n \rightarrow \infty, $ $\big(1-{2 \over n-1} x\big)^{(n-1)/2} \rightarrow e^{-x},$ which concludes the proof.
\end{proof}

\section{Proofs in Section~\ref{sec:rex}}
\begin{proof}[Proof of Theorem~\ref{thm.rex.arb}]
It is easy to show \eqref{asymp.k.ell} and \eqref{asymp.k.gauss}. To show \eqref{asymp.k.highrank.gauss}, note that for any $0< \varepsilon <1$,
{\small
\begin{equation}
\begin{split}
 & \Pb\bigg(\|\bX\|_\infty  /\sqrt{d(1-p^{-2/(d-1)})} > 1\bigg)\\
 =&  \Pb\bigg(\|\bZ \|_2 \max_j |\langle \bL_j,\bU \rangle|  /\sqrt{d(1-p^{-2/(d-1)})} > 1\bigg) \\
\le &  \Pb \bigg(\max_j |\langle \bL_j,\bU \rangle|  > \sqrt{(1-\varepsilon)(1-p^{-2/(d-1)})}\bigg)\\
& +\Pb\bigg(\|\bZ\|_2 > \sqrt{(1+\varepsilon)d} \bigg)
\end{split}
\end{equation}
}
We will show each of the two summands in the last line can be made small with a proper choice of $\varepsilon=\varepsilon(p).$

By the proof of Theorem~\ref{thm.saber}, we see that
\begin{equation}\label{asymp.k.bound1}
\begin{split}
 & \Pb \bigg(\max_j |\langle \bL_j,\bU \rangle| > \sqrt{(1-\varepsilon)(1-p^{-2/(d-1)})}\bigg)\\
\le & \sqrt{2 \over \pi (1-\varepsilon)} {p^{1/(d-1)}\exp\bigg({1 \over 2}\varepsilon(d-1)(p^{2/(d-1)}-1)\bigg) \over \sqrt{(d-1)(p^{2/(d-1)}-1)}}
\end{split}
\end{equation}
Note also that $\|\bZ\|_2^2 \sim \chi^2_d$. Thus by the Chernoff bound for $\chi^2_d$ distribution,
\begin{equation} \label{asymp.k.bound2}
\begin{split}
& \Pb\big(\|\bZ\|_2 > \sqrt{(1+\varepsilon)d} \big)\\
=& \Pb\big(\|\bZ\|_2^2 > (1+\varepsilon)d \big) \\
\le & ((1+\varepsilon)e^{-\varepsilon})^{d/2} \\
\le & e^{-d \varepsilon^2/6}
\end{split}
\end{equation}
Due to \eqref{asymp.k.bound1} and \eqref{asymp.k.bound2}, we let $\varepsilon=\varepsilon(p)={\log\log p /(4 \log p) }.$ In the case when $\lim_{p \rightarrow \infty} {(\log\log{p})^2d \over (\log p)^2} \rightarrow \infty,$ both \eqref{asymp.k.bound1} and \eqref{asymp.k.bound2} converge to $0$.

\end{proof}
\begin{proof}[Proof of Theorem~\ref{thm.rex.iid.lim}]
Note that,
\begin{equation}
  K_{p,d}- u_d a_{p,d} = M_{p,d}^2(\|\bZ\|_2^2- u_d) + u_d (M_{p,d}^2-a_{p,d})
\end{equation}
Now note also that $a_{p,d}$ is bounded and that $M_{p,d}/a_{p,d} \stackrel{prob.}{\longrightarrow} 1.$ Therefore, the theorem follows from Slutsky's theorem by checking the limit of the ratio $v_d a_{p,d}$ and  $u_d b_{p,d}$ and picking the one with a larger magnitude as the scaling factor.
\end{proof}

\section*{Acknowledgment}
The author appreciates the insightful suggestions from L.~D.~Brown, A.~Buja, T.~Cai, J.~Fan, J.~S.~Marron, and H.~Shen. The author thanks R.~Adler, J.~Berger, R.~Berk, A.~Budhiraja, E.~Candes, L.~de Haan, J.~Galambos, E.~George, S.~Gong, J.~Hannig, T.~Jiang, I.~Johnstone, A.~Krieger, R.~Leadbetter, R.~Li, D.~Lin, H.~Liu, J.~Liu, W.~Liu, Y.~Liu, Z.~Ma, X.~Meng, A.~Munk, A.~Nobel, E.~Pitkin, S.~Provan, A.~Rakhlin, D.~Small, R.~Song, J.~Xie, M.~Yuan, D.~Zeng, C.-H.~Zhang, N.~Zhang, L.~Zhao, Y.~Zhao, and Z.~Zhao for helpful discussions. The author also thanks the editors and reviewers for important comments that substantially improve the manuscript. The author is particularly grateful for L.~A.~Shepp for his inspiring introduction of the random packing literature.

This research is partially supported by NSF DMS-1309619, NSF DMS-1613112, NSF IIS-1633212, and the Junior Faculty Development Award at UNC Chapel Hill. This material was also partially based upon work supported by the NSF under Grant DMS-1127914 to the Statistical and Applied Mathematical Sciences Institute. Any opinions, findings, and conclusions or recommendations expressed in this material are those of the author(s) and do not necessarily reflect the views of the National Science Foundation.

\ifCLASSOPTIONcaptionsoff
  \newpage
\fi

\bibliographystyle{IEEEtran}
\bibliography{SCPARED}

\begin{IEEEbiographynophoto}{Kai Zhang}
received the Ph.D. degree in Mathematics from Temple University in 2007 and the Ph.D. degree in Statistics from the Wharton School, University of Pennsylvania, in 2012. He is now with the Department of Statistics and Operations Research at the University of North Carolina, Chapel Hill. His research interests include high-dimensional regression and inference, causal inference and observational studies, and quantum computing.
\end{IEEEbiographynophoto}

\end{document}